\long\def\symbolfootnote[#1]#2{\begingroup\def\thefootnote{\fnsymbol{footnote}}
\footnote[#1]{#2}\endgroup}
\newtheorem{theorem}{Theorem}[section]
\theoremstyle{remark}
\theoremstyle{definition}
\theoremstyle{proposition}
\numberwithin{equation}{section}
\begin{document}
\title{Topology of K\"{a}hler Ricci solitons}
\author{Ovidiu Munteanu and Jiaping Wang}
\maketitle

\begin{abstract}
We study the issue of connectedness at infinity of gradient K\"{a}hler Ricci
solitons. For shrinking K\"{a}hler Ricci solitons, we show they must be
connected at infinity. We also show the same holds true for expanding K\"{a}%
hler Ricci solitons with proper potential functions. As a separate issue, we
obtain a sharp pointwise lower bound for the weight function in terms of the
lower bound of the associated Bakry-\'Emery curvature.
\end{abstract}

\symbolfootnote[0]{The first author partially supported by NSF grant No.
DMS-1262140 and the second author by NSF grant No. DMS-1105799}

For a smooth function $f$ on a Riemannian manifold $\left( M,g\right) ,$ the
associated Bakry-\'{E}mery curvature, denoted by $\mathrm{Ric}_{f},$ is
defined by

\begin{equation*}
\mathrm{Ric}_{f}=\mathrm{Ric}+\mathrm{Hess}\left( f\right) ,
\end{equation*}%
where $\mathrm{Ric}$ is the Ricci curvature of $M$ and $\mathrm{Hess}\left(
f\right) $ the hessian of $f.$ This curvature notion has received much
attention recently. One of the reasons is that it admits a
generalization to more general metric spaces. Indeed, if one normalizes the
weighted measure $e^{-f}\,dv$ to be a probability measure on $M,$ where $dv$
is the volume form of $M,$ then Lott and Villani \cite{LV}, and
independently, Sturm \cite{S1, S2} have interpreted the lower bound of $%
\mathrm{Ric}_{f}$ as a convexity measurement for the Nash entropy on the
space of probability measures on $M$ equipped with the Wasserstein metric.
They further exploited this interpretation to define a notion of Ricci
curvature for general metric measure spaces. From the geometry point of
view, manifolds with constant Bakry-\'{E}mery curvature are of great
interest in the study of the Ricci flows. Recall a gradient Ricci soliton is
a manifold $\left( M,g\right) $ such that there exists a function $f\in
C^{\infty }\left( M\right) $ satisfying%
\begin{equation*}
\mathrm{Ric}_{f}=\lambda g
\end{equation*}%
for some constant $\lambda \in \mathbb{R}.$ Solitons are classified as
shrinking, steady or expanding, according to $\lambda >0,\ \lambda =0$ or $%
\lambda <0,$ respectively. The function $f$ is called the potential.
Customarily, the constant $\lambda $ is assumed to be $1/2,$ $0,$ or $-1/2$
by scaling. Obviously, Ricci solitons are natural generalizations of
Einstein manifolds. More significantly, they are self similar solutions to
the Ricci flows, and play a crucial role in the study of singularities of
the flows \cite{CLN}.

In this paper, we continue our study of the geometry and topology of
manifolds with Bakry-\'Emery curvature bounded from below, paying particular
attention to the gradient Ricci solitons. In our previous work \cite{MW}, we
have proved that a nontrivial steady gradient Ricci soliton must be
connected at infinity. While this topological information is of interest
itself, it also tells that one can not perturb the connected sum of any two
such solitons into a new one. In \cite{MW1}, a similar result was
established for expanding Ricci solitons with scalar curvature bounded below
by $S\geq -\frac{n}{2}+\frac{1}{2},$ where $n$ is the dimension of the
manifold. Note that by \cite{PRS} $S\geq -\frac{n}{2}$ for any expanding
Ricci soliton. Moreover, according to \cite{CW}, there exists nontrivial
expanding Ricci solitons with more than one end with scalar curvature
asymptotic to $-\frac{n}{2}$ at infinity.

The case of shrinking solitons is apparently more challenging. We speculate
that a shrinking Ricci soliton with more than one end must split as a direct
product of the real line with an Einstein manifold of positive scalar
curvature. This speculation is partially based on our work in \cite{MW3}.
Our first objective here is to confirm this speculation for gradient K\"{a}%
hler Ricci solitons.

\begin{theorem}
\label{shrinker}Let $\left( M,g,f\right) $ be a gradient K\"{a}hler
shrinking Ricci soliton. Then $\left( M,g\right) $ is connected at infinity i.e., it has only one end.
\end{theorem}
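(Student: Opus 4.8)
The plan is to argue by contradiction. Write $\Delta_f u = \Delta u - \langle \nabla f, \nabla u\rangle$ for the drift Laplacian, which is self-adjoint with respect to the weighted volume $e^{-f}\,dv$, and normalize the soliton so that $\mathrm{Ric}_f = \tfrac12\,g$. I first collect the structural input. It is known that on a gradient shrinking soliton the potential grows quadratically, $f\sim \tfrac14\, r^2$, so that $\int_M e^{-f}\,dv<\infty$ and, consequently, every end of $M$ is $f$-parabolic; moreover $S\ge 0$ and $|\nabla f|^2 = f - S$ after a suitable normalization of $f$. The decisive K\"ahler input is that $\mathrm{Hess}(f)$ is of type $(1,1)$, so that the vector field $X := J\nabla f$ is Killing and its flow preserves $f$ and each level set $\{f = t\}$; I will also use repeatedly that the complex structure $J$ is parallel.

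Suppose now, for contradiction, that $(M,g)$ has at least two ends. The first step is to produce, by the Li--Tam procedure applied to two of the ends, a nonconstant $f$-harmonic function $u$, $\Delta_f u = 0$, as a limit of solutions of Dirichlet problems on a compact exhaustion. Because every end is $f$-parabolic, $u$ need not be bounded and, as the model cylinder $\mathbb{R}\times N$ with $u'\sim e^{t^2/4}$ already shows, it will in general have infinite weighted Dirichlet energy. The second step is the weighted Bochner identity: since $\Delta_f u=0$ and $\mathrm{Ric}_f=\tfrac12 g$,
\[
\tfrac12\,\Delta_f |\nabla u|^2 = \left|\mathrm{Hess}(u)\right|^2 + \tfrac12\,|\nabla u|^2 .
\]
Combined with the Kato inequality $\left|\mathrm{Hess}(u)\right| \ge \big|\nabla |\nabla u|\big|$, this says that $|\nabla u|$ is, up to the degeneracy measured by the Kato defect, a weighted subsolution with a favorable zeroth order term.

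The analytic heart of the argument is to upgrade the Bochner inequality to an equality, forcing the Kato defect to vanish and hence $\mathrm{Hess}(u)$ to be proportional to $du\otimes du$; this makes the unit field $e := \nabla u/|\nabla u|$ parallel, so that $M$ splits isometrically as $\mathbb{R}\times N$. I expect this to be the main obstacle, precisely because the corresponding statement for general shrinkers is still only conjectural. The usual device of integrating $\tfrac12\Delta_f|\nabla u|^2$ against a cutoff is obstructed by the infinite weighted energy of $u$; I would instead balance the growth of $u$ against the Gaussian decay of $e^{-f}$, choosing cutoffs adapted to the sublevel sets of $f$, and use the Killing field $X = J\nabla f$ to control $u$ and $|\nabla u|$ along the ends. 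Once the splitting is in hand, note that $\mathbb{R}\times N$ can have more than one end only when $N$ is compact; since we are assuming at least two ends, $N$ is a compact manifold and, restricting the soliton equation to the totally geodesic factor, $(N, g_N)$ is itself a compact gradient shrinking soliton, say with potential $c$ satisfying $\mathrm{Ric}_N + \mathrm{Hess}(c) = \tfrac12 g_N$.

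It is here that the K\"ahler structure closes the argument. Since $J$ is parallel, $Je$ is again a parallel unit field; as $\langle Je, e\rangle = 0$, the field $Je$ is tangent to the compact factor $N$ and is parallel for the induced connection there, hence a nonzero parallel vector field $V$ on $N$. But a compact gradient shrinking soliton admits no such field: parallelism gives $\mathrm{Ric}_N(V,V)=0$ and $\mathrm{div}\,V=0$, whence $\tfrac12|V|^2 = \mathrm{Hess}(c)(V,V) = V(Vc) = \mathrm{div}\big((Vc)V\big)$, so that integrating over the closed manifold $N$ forces $V\equiv 0$, a contradiction. Therefore $M$ cannot have two ends, and $(M,g)$ is connected at infinity.
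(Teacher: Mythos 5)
Your proposal stalls exactly where you say it does, and the admission matters: the passage from the weighted Bochner formula to the isometric splitting $M=\mathbb{R}\times N$ is not a deferred technicality but the entire content of the theorem. Having correctly observed that $\int_M e^{-f}dv<\infty$ forces every end to be $f$-parabolic, you must run Li--Tam in the parabolic regime, so your function $u$ has infinite weighted Dirichlet energy, and the cutoff argument that would normally force the Kato defect to vanish is unavailable. The remedies you sketch --- ``balancing the growth of $u$ against the Gaussian decay of $e^{-f}$,'' ``cutoffs adapted to the sublevel sets of $f$,'' ``using $J\nabla f$ to control $u$'' --- are hopes, not estimates; no mechanism is given by which the Killing field would enter such an integration by parts. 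As you note yourself, the splitting you need is precisely the authors' open speculation for general shrinkers, so your plan assumes the hard part. (Your endgame is sound: if the splitting held with $N$ compact, then $Je$ would be a nonzero parallel field on a compact shrinking factor, and your divergence computation correctly rules that out. But nothing upstream delivers the splitting.)

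The paper's route avoids the splitting entirely, and the idea you are missing is to \emph{reverse the weight}. Set $\varphi:=-af$ with $a>0$, so the relevant measure is $e^{-\varphi}dv=e^{af}dv$, which grows like $e^{\frac{a}{4}r^{2}}$ rather than decays. The technically hard step --- valid for any gradient shrinker, K\"ahler or not --- is that every end is then $\varphi$-nonparabolic: if some end $E$ were $\varphi$-parabolic, Nakai's criterion \cite{N} provides a proper $\varphi$-harmonic function $h\geq 1$ on $E$; the level-set/co-area argument gives $\int_{E}\left\vert \nabla \ln h\right\vert ^{2}e^{-\varphi }<\infty$, hence
\begin{equation*}
\int_{B_{x}\left( 1\right) }\left\vert \nabla \ln h\right\vert ^{2}\leq e^{-\frac{a}{4}r^{2}\left( x\right) +cr\left( x\right) },
\end{equation*}
and a Moser iteration, using a Sobolev inequality whose constant degrades only like $e^{c\left( n\right) r\left( x\right) }$, upgrades this to the pointwise bound $\left\vert \nabla \ln h\right\vert \left( x\right) \leq c\,e^{-\frac{a}{16}r^{2}\left( x\right) }$; integrating along geodesics shows $h$ is bounded, contradicting properness. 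Once all ends are $\varphi$-nonparabolic, two ends produce via Li--Tam \cite{LT} a nonconstant $\varphi$-harmonic $u$ with $\int_{M}\left\vert \nabla u\right\vert ^{2}e^{-\varphi }<\infty$: the finite energy you could not obtain now comes for free. The K\"ahler hypothesis enters only at the last step, through Theorem \ref{vanishing}: since $\varphi=-af$ still satisfies $\varphi _{\alpha \beta }=0$, has at most linear gradient growth, and is proper, $u$ must be pluriharmonic and hence constant, a contradiction. In short, instead of fighting the Gaussian decay of $e^{-f}$, the proof exploits the Gaussian growth of $e^{af}$ to place itself in the nonparabolic, finite-energy regime where the K\"ahler vanishing machinery applies.
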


Let us point out that so far all known examples of nontrivial gradient
shrinking Ricci soliton are K\"{a}hler \cite{Ca1}. Previously, in \cite{MS},
Sesum and the first author have observed that K\"{a}hler shrinking Ricci
solitons have at most one non-parabolic end. Under the restrictive
assumption that the scalar curvature satisfies $\sup S<\frac{n}{2}-1,$ they
managed to rule out the existence of parabolic ends.

Our proof relies on the following result which is of interest itself. It
generalizes a result due to P. Li \cite{L}, which corresponds to the case $%
\varphi$ is constant.

\begin{theorem}
\label{vanishing}Let $\left( M,g\right) $ be a complete K\"{a}hler manifold.
Let $\varphi$ be a smooth real function on $M$ such that $J(\nabla \varphi)$
is a Killing vector field. Assume that there exists a constant $C>0$ such
that 
\begin{equation*}
\left\vert \nabla \varphi \right\vert \left( x\right) \leq C\,d\left(
x_{0},x\right) .
\end{equation*}%
Then any solution $u$ to $\Delta _{\varphi }u=0$ with 
\begin{equation*}
\int_{M}\left\vert \nabla u\right\vert ^{2}e^{-\varphi }<\infty
\end{equation*}
must be pluriharmonic. In particular, $u$ must be a constant if $\varphi$ is
proper.
\end{theorem}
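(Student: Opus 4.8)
The plan is to adapt P.~Li's Hodge-theoretic argument to the weighted setting, the point being that the hypothesis on $\varphi$ is exactly what keeps the K\"ahler identities intact after twisting by the weight $e^{-\varphi}$, so that no curvature assumption is needed. Set $\sigma=\partial u$, the $(1,0)$-part of $du$, and let $\partial^{*}_{\varphi},\bar\partial^{*}_{\varphi}$ be the formal adjoints of $\partial,\bar\partial$ for the weighted inner product $\langle\alpha,\beta\rangle_{\varphi}=\int_{M}\langle\alpha,\beta\rangle\,e^{-\varphi}$, with associated complex Laplacians $\Delta_{\partial,\varphi}=\partial\partial^{*}_{\varphi}+\partial^{*}_{\varphi}\partial$ and $\Delta_{\bar\partial,\varphi}=\bar\partial\bar\partial^{*}_{\varphi}+\bar\partial^{*}_{\varphi}\bar\partial$. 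A direct computation gives $\partial^{*}_{\varphi}=\partial^{*}+\iota_{\nabla^{1,0}\varphi}$ and $\bar\partial^{*}_{\varphi}=\bar\partial^{*}+\iota_{\nabla^{0,1}\varphi}$. Two facts hold with no hypothesis on $\varphi$: since $\sigma$ has type $(1,0)$, $\bar\partial^{*}_{\varphi}\sigma=0$ for type reasons; and since $\partial^{2}=0$, $\partial\sigma=\partial\partial u=0$. Finally, $\int_{M}|\sigma|^{2}e^{-\varphi}=\tfrac12\int_{M}|\nabla u|^{2}e^{-\varphi}<\infty$, so $\sigma\in L^{2}(e^{-\varphi})$.

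The heart of the matter is the weighted K\"ahler identity $\Delta_{\bar\partial,\varphi}=\Delta_{\partial,\varphi}$. In the unweighted case this is the classical $\Delta_{\bar\partial}=\Delta_{\partial}$. Twisting by $e^{-\varphi}$ alters the adjoints only by the zeroth-order interior multiplications above, so the discrepancy $\Delta_{\bar\partial,\varphi}-\Delta_{\partial,\varphi}$ collapses to commutators of the form $\partial\,\iota_{\nabla^{1,0}\varphi}+\iota_{\nabla^{1,0}\varphi}\,\partial$ minus their conjugates, which upon expansion are governed by the $(2,0)$-part of $\mathrm{Hess}(\varphi)$, i.e.\ by the components $\varphi_{ij}$. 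This is where the hypothesis enters decisively. Using $\nabla J=0$ one computes $\langle\nabla_{Y}(J\nabla\varphi),Z\rangle=-\mathrm{Hess}(\varphi)(Y,JZ)$, and the Killing condition, namely antisymmetry of this expression in $Y,Z$, is equivalent to $\mathrm{Hess}(\varphi)$ being $J$-invariant, equivalently $\varphi_{ij}=0$, equivalently $\nabla^{1,0}\varphi$ holomorphic. Under this condition the offending commutators vanish and $\Delta_{\bar\partial,\varphi}=\Delta_{\partial,\varphi}$ holds; I would first isolate this equivalence as a lemma and then verify the identity by this commutator bookkeeping.

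Granting the identity, apply it to $\sigma$. From $\bar\partial^{*}_{\varphi}\sigma=0$ and $\partial\sigma=0$ we obtain $\bar\partial^{*}_{\varphi}\bar\partial\sigma=\Delta_{\bar\partial,\varphi}\sigma=\Delta_{\partial,\varphi}\sigma=\partial\,\partial^{*}_{\varphi}\sigma$; pairing against $\sigma$ and integrating by parts yields $\|\bar\partial\sigma\|^{2}_{\varphi}=\|\partial^{*}_{\varphi}\sigma\|^{2}_{\varphi}$. Evaluating on functions, $\partial^{*}_{\varphi}\sigma=\Delta_{\partial,\varphi}u$, which by the same identity equals $-\tfrac12\Delta_{\varphi}u=0$; hence $\bar\partial\partial u=0$ and $u$ is pluriharmonic. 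The main obstacle is the justification of these integrations by parts: $M$ is complete but noncompact and the drift $\nabla\varphi$ is unbounded, and a priori neither $\bar\partial\sigma$ nor $\partial^{*}_{\varphi}\sigma$ is known to lie in $L^{2}(e^{-\varphi})$, so Gaffney's theorem is not directly available. I would instead insert Lipschitz cutoffs $\chi_{R}$ equal to $1$ on $B(x_{0},R)$, supported in $B(x_{0},2R)$, with $|\nabla\chi_{R}|\le C/R$, test the identity against $\chi_{R}^{2}\sigma$, and show that the resulting error terms, of the schematic form $\int_{B_{2R}\setminus B_{R}}|\nabla\chi_{R}|\,(|\bar\partial\sigma|+|\partial^{*}_{\varphi}\sigma|)\,|\sigma|\,e^{-\varphi}$ together with one term carrying a factor $|\nabla\varphi|$, tend to $0$ as $R\to\infty$. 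It is precisely here that the linear growth bound $|\nabla\varphi|(x)\le C\,d(x_{0},x)$, combined with $\sigma\in L^{2}(e^{-\varphi})$, is needed to control the weight-induced term and close the estimate.

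For the final assertion, suppose in addition that $\varphi$ is proper. Pluriharmonicity means $u_{i\bar j}=0$; in particular $\Delta u=2\,g^{i\bar j}u_{i\bar j}=0$, so $u$ is harmonic in the ordinary sense, and then $\Delta_{\varphi}u=0$ forces $\langle\nabla\varphi,\nabla u\rangle=\Delta u=0$. For a regular value $t$ of $\varphi$ the sublevel set $\Omega_{t}=\{\varphi\le t\}$ is compact with smooth boundary $\{\varphi=t\}$ whose outward normal is $\nu=\nabla\varphi/|\nabla\varphi|$, so by Stokes' theorem $2\int_{\Omega_{t}}|\nabla u|^{2}=\int_{\Omega_{t}}\Delta(u^{2})=2\int_{\{\varphi=t\}}u\,\langle\nabla u,\nu\rangle=0$, the boundary integral vanishing because $\langle\nabla u,\nabla\varphi\rangle=0$ on the level set. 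Letting $t\to\infty$ gives $\nabla u\equiv0$, so $u$ is constant.
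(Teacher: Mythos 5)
Your reduction of the Killing condition to $\varphi_{\alpha\beta}=0$ (equivalently, $\nabla^{1,0}\varphi$ holomorphic) is correct, and your cutoff strategy and final properness argument are sound (the latter coincides with the paper's). However, the heart of your proof --- the ``weighted K\"ahler identity'' $\Delta_{\bar\partial,\varphi}=\Delta_{\partial,\varphi}$ --- is false, and this is a genuine gap, not a matter of conventions. Using your own formulas $\partial^{*}_{\varphi}=\partial^{*}+\iota_{\nabla^{1,0}\varphi}$ and $\bar\partial^{*}_{\varphi}=\bar\partial^{*}+\iota_{\nabla^{0,1}\varphi}$, already on functions (where the unweighted identity $\Delta_{\bar\partial}=\Delta_{\partial}$ does hold) one finds
\begin{equation*}
\Delta_{\bar\partial,\varphi}u-\Delta_{\partial,\varphi}u
=g^{\alpha\bar\beta}\left( \varphi_{\alpha}u_{\bar\beta}-\varphi_{\bar\beta}u_{\alpha}\right)
=i\left\langle J\nabla\varphi ,\nabla u\right\rangle ,
\end{equation*}
a first-order quantity that has nothing to do with the $(2,0)$-part of $\mathrm{Hess}\left( \varphi\right) $ and does not vanish under your hypothesis. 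Concretely, take $\mathbb{C}$ with the flat metric and $\varphi=\left\vert z\right\vert^{2}$, so that $\varphi_{zz}=0$ and $J\nabla\varphi$ is the rotation field, hence Killing; for $u=z+\bar z$ the difference above is a nonzero multiple of $\bar z-z$. What the hypothesis actually buys is that the discrepancy equals the Lie derivative $i\,L_{J\nabla\varphi}$, a type-preserving first-order operator commuting with $\partial$ and $\bar\partial$; it does not make the discrepancy zero.

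This failure propagates to the step where you conclude. Since $\Delta_{\partial,\varphi}u=\partial^{*}_{\varphi}\partial u$ has real part $-\tfrac12\Delta_{\varphi}u$ but imaginary part $-\tfrac12\left\langle J\nabla\varphi,\nabla u\right\rangle$, the equation $\Delta_{\varphi}u=0$ yields only
\begin{equation*}
\partial^{*}_{\varphi}\sigma=-\frac{i}{2}\left\langle J\nabla\varphi ,\nabla u\right\rangle ,
\end{equation*}
not $\partial^{*}_{\varphi}\sigma=0$; so even granting all your integrations by parts, your scheme produces a relation between $\left\Vert \bar\partial\partial u\right\Vert^{2}_{\varphi}$ and $\left\Vert \left\langle J\nabla\varphi,\nabla u\right\rangle\right\Vert^{2}_{\varphi}$ (plus the cross term from $iL_{J\nabla\varphi}$), and pluriharmonicity does not follow. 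It is instructive to compare with the paper's proof, which runs the analogous integration by parts directly in components: there the very same quantity appears, but it is handled by the pointwise \emph{inequality} $\mathrm{Re}\left( u_{\alpha}u_{\beta}\varphi_{\bar\alpha}\varphi_{\bar\beta}\right) \leq\left\langle \nabla u,\nabla\varphi\right\rangle^{2}$, whose two sides differ exactly by $\left( \mathrm{Im}\left( u_{\alpha}\varphi_{\bar\alpha}\right) \right)^{2}$, a positive multiple of $\left\langle J\nabla\varphi,\nabla u\right\rangle^{2}$. In other words, the term your identity would erase is precisely the term any correct proof must confront, and it can be discarded only because it enters with a favorable sign --- not because it vanishes. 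To repair your argument you would have to carry the $iL_{J\nabla\varphi}$ term honestly through the Hodge-theoretic computation and exhibit this sign, or else revert to the componentwise Bochner-type calculation of the paper.
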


Let us mention that we have studied other Liouville theorems under a similar
setting in \cite{MW2}.

We point out that the preceding result is applicable to K\"{a}hler Ricci
solitons. Indeed, in terms of local holomorphic coordinates, the K\"{a}hler
Ricci soliton equation can be rewritten into 
\begin{eqnarray*}
R_{\alpha \bar{\beta}}+f_{\alpha \bar{\beta}} &=&\lambda \,g_{\alpha \bar{%
\beta}} \\
f_{\alpha \beta } &=&0.
\end{eqnarray*}%
As pointed out in \cite{C}, $f_{\alpha \beta }=0$ is equivalent to the
vector filed $J\left( \nabla f\right) $ being a Killing vector field. It is
also well-known (see \cite{CZ}) that $\left\vert \nabla f\right\vert $ is at
most of linear growth for gradient Ricci solitons.

The general framework of our proof of Theorem \ref{shrinker} follows the
function theoretic approach developed by Li and Tam \cite{LT}, where they
used harmonic functions to detect the number of ends. Here, we instead work
with the weighted Laplacian

\begin{equation*}
\Delta _{\varphi }u:=\Delta u-\left\langle \nabla \varphi ,\nabla
u\right\rangle ,
\end{equation*}%
where $\varphi =-f.$ A technically challenging step is to show that all the
ends of a gradient shrinking soliton, not necessarily K\"{a}hler, are $%
\varphi $-nonparabolic, namely, there exists a nonconstant solution $u$ to $%
\Delta _{\varphi }u=0$ on each end $E$ with $u=1$ on $\partial E$ and $0\leq
u\leq 1$ on $E.$ Once this is done, one concludes there exists a nonconstant
solution $u$ to $\Delta _{\varphi }u=0$ with 
\begin{equation*}
\int_{M}\left\vert \nabla u\right\vert ^{2}e^{-\varphi }<\infty 
\end{equation*}%
if $M$ is not connected at infinity. Notice that $f,$ hence $\varphi ,$ is
proper on a shrinking gradient soliton according to \cite{CZ}. The proof of 
Theorem \ref{shrinker}  is
then finished by invoking Theorem \ref{vanishing}.

Our technique can be applied to obtain a corresponding result for expanding K%
\"{a}hler Ricci solitons.

\begin{theorem}
\label{expanding}Let $\left( M,g,f\right) $ be an expanding K\"{a}hler
gradient Ricci soliton. Assume that the potential $f$ is proper. Then $%
\left( M,g\right) $ has only one end.
\end{theorem}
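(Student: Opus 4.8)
The plan is to follow the same function-theoretic strategy used for Theorem \ref{shrinker}, so the proof reduces to the two ingredients that were already assembled there. First I would work with the weighted Laplacian $\Delta_\varphi u = \Delta u - \langle \nabla \varphi, \nabla u\rangle$ with $\varphi = -f$. Since the soliton is expanding, the equation $\mathrm{Ric}_f = \lambda g$ with $\lambda < 0$ gives $\mathrm{Ric}_\varphi = \mathrm{Ric} - \mathrm{Hess}(f) $; more usefully, the Bochner-type formula for $\Delta_\varphi$ produces the Bakry-\'Emery term $\mathrm{Ric}_f = \lambda g$, and in the expanding case $\lambda < 0$ so one does not get a positivity sign directly. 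The point, however, is that the \emph{properness} hypothesis on $f$ is exactly what the argument for Theorem \ref{shrinker} used at the final step, so the same scheme should carry over.

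The key steps, in order, are as follows. First, I would show that every end of the expanding soliton is $\varphi$-nonparabolic: on each end $E$ there is a nonconstant solution $u$ to $\Delta_\varphi u = 0$ with $u = 1$ on $\partial E$ and $0 \le u \le 1$ on $E$. This is the technically demanding step that was flagged as challenging even in the shrinking case; it is established by a capacity/barrier estimate using the structure of the weighted measure $e^{-\varphi}\,dv = e^{f}\,dv$ and the growth of $f$ on the ends of an expanding soliton. Second, assuming $M$ is \emph{not} connected at infinity, I would patch these per-end solutions together via the Li--Tam construction to produce a \emph{global} nonconstant $\varphi$-harmonic function $u$ on $M$ satisfying the finite weighted Dirichlet energy bound
\begin{equation*}
\int_M |\nabla u|^2 e^{-\varphi} < \infty.
\end{equation*}
Third, I would verify the hypotheses of Theorem \ref{vanishing}: set $\varphi = -f$, note that $J(\nabla \varphi)$ is Killing because $f_{\alpha\beta} = 0$ for any K\"ahler Ricci soliton (as recalled in the excerpt), and that $|\nabla \varphi| = |\nabla f|$ is at most of linear growth by \cite{CZ}. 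Theorem \ref{vanishing} then forces $u$ to be pluriharmonic, and since $f$ (hence $\varphi$) is proper by hypothesis, the ``in particular'' clause of Theorem \ref{vanishing} forces $u$ to be constant, contradicting that $u$ was nonconstant. Hence $M$ has only one end.

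The main obstacle I expect is the first step, the $\varphi$-nonparabolicity of every end. For shrinking solitons this relied on the favorable sign of $\lambda$ and the rapid growth of $f$ (which makes $e^{-\varphi} = e^{f}$ decay, controlling the weighted volume of ends); in the expanding case $\lambda < 0$ and the behavior of $f$ at infinity is governed by the properness assumption rather than a built-in quadratic lower bound, so I would need to reexamine the capacity estimate that guarantees a bounded nonconstant $\varphi$-harmonic function on each end. Concretely, I would estimate the $\varphi$-capacity of the ends using the growth of $f$ on an expander and show that the weighted measure gives each end infinite $\varphi$-volume in the appropriate sense, which yields nonparabolicity; the properness of $f$ is what makes the final Liouville conclusion available, so the two hypotheses work in tandem. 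Once nonparabolicity is secured, the remainder is a direct transcription of the shrinking argument, with $\lambda < 0$ entering only through the growth control on $f$ already cited.
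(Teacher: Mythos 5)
Your overall skeleton---prove every end is nonparabolic for a suitable weighted Laplacian, run the Li--Tam construction to get a nonconstant weighted-harmonic function with finite weighted energy, then kill it with Theorem \ref{vanishing}---is exactly the paper's skeleton, and your Step 3 is fine. The gap is in Step 1, and as you have set it up it is fatal, because you chose the wrong sign of the weight. You take $\varphi=-f$, i.e.\ the measure $e^{-\varphi}dv=e^{f}dv$. For an expander with proper potential, $-f\rightarrow+\infty$ at infinity, so $e^{f}$ \emph{decays}; your parenthetical that this mirrors the shrinking case rests on a sign slip, since for shrinkers $f\geq\frac{1}{4}r^{2}-cr\rightarrow+\infty$ and the paper's shrinking weight $\varphi=-af$ makes $e^{-\varphi}=e^{af}$ \emph{grow}, not decay. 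With your choice, the nonparabolicity claim is actually false in general: on the Gaussian expander (flat $\mathbb{C}^{m}$ with $f=-|z|^{2}/4$, a K\"ahler expanding soliton with proper potential) one has $\int_{M}e^{f}dv=\int e^{-|z|^{2}/4}<\infty$, and a complete weighted manifold with finite weighted volume is $\varphi$-parabolic; so its end is $\varphi$-parabolic and no argument can deliver your Step 1. Relatedly, the mechanism you propose---``infinite weighted volume in the appropriate sense yields nonparabolicity''---is not a valid implication even in the unweighted case ($\mathbb{R}^{2}$ has infinite volume and is parabolic).

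The paper works with the opposite weight, $\varphi=f$ (measure $e^{-f}dv$, which grows since $-f\rightarrow+\infty$), and obtains nonparabolicity of all ends not from a volume count but from a weighted Poincar\'e inequality generated by an explicit supersolution: computing
\begin{equation*}
\Delta_{f}e^{af}=-\left(a\left(\tfrac{n}{2}+S\right)+\left(a-a^{2}\right)\left\vert \nabla f\right\vert ^{2}\right)e^{af}
\end{equation*}
and taking $a=\tfrac{1}{2}$ gives, via Hamilton's identity $S+\left\vert \nabla f\right\vert ^{2}=-f$ and $S\geq-\tfrac{n}{2}$,
\begin{equation*}
\int_{M}\sigma\,\phi^{2}e^{-f}\leq\int_{M}\left\vert \nabla \phi\right\vert ^{2}e^{-f},\qquad \sigma:=\tfrac{1}{4}\left(-f+\tfrac{n}{2}\right),
\end{equation*}
for all $\phi\in C_{0}^{\infty}(M)$. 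Properness of $f$ then gives $\sigma\geq1$ outside a compact set, and by the argument of \cite{MW1} such an inequality forces every end to be $f$-nonparabolic. This supersolution/Poincar\'e step is the key idea missing from your proposal (and note it is a genuinely different mechanism from the shrinking case, where the paper rules out parabolic ends via Nakai's proper-harmonic-function characterization and a Moser-iteration gradient estimate, not a capacity or volume bound). Once you have it, and the correct weight $\varphi=f$, the rest of your outline---Li--Tam plus Theorem \ref{vanishing}, using $f_{\alpha\beta}=0$, the linear growth of $\left\vert \nabla f\right\vert$ from $\left\vert \nabla f\right\vert ^{2}\leq-f+\tfrac{n}{2}$, and properness of $f$---goes through exactly as you describe.
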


Let us point out that it is necessary to impose the additional assumption
that $f$ is proper. This is because K\"{a}hler-Einstein manifolds, viewed as
trivial expanding Ricci solitons, may have more than one end.

As a separate issue, we also obtain a result concerning the weight function $%
f$ for a smooth metric measure space with its Bakry-\'{E}mery curvature
bounded below.

\begin{theorem}
\label{positive}Let $\left( M,g,e^{-f}dv\right) $ be a smooth metric measure
space with Bakry-\'{E}mery curvature bounded by $\mathrm{Ric}_{f}\geq \frac{%
\lambda }{2}g,$ where $\lambda \in \left\{ -1,0,1\right\} $. Then for any $%
x_{0}\in M$ fixed, there exist $a>0,$ depending only on dimension $n $ and $%
\sup_{B_{x_{0}}\left( 1\right) }\left\vert f\right\vert ,$ such that 
\begin{equation*}
f\left( x\right) \geq \frac{\lambda }{4}\,r^{2}\left( x\right) -a\,r\left(
x\right)
\end{equation*}%
for all $x\in M$ with $r\left( x\right)$ sufficiently large, where $r\left(
x\right) :=d\left( x_{0},x\right).$
\end{theorem}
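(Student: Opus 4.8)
The plan is to localize the estimate to a minimizing geodesic and combine the Bakry--\'Emery lower bound with the classical second variation of arc length. Fix $x$ with $r=r(x)=d(x_0,x)$ large, let $\gamma:[0,r]\to M$ be a unit-speed minimizing geodesic from $x_0$ to $x$, and set $h(t)=f(\gamma(t))$ and $T=\gamma'$. Evaluating $\mathrm{Ric}_f\ge\frac{\lambda}{2}g$ on the unit tangent gives the pointwise inequality
\[
h''(t)=\mathrm{Hess}(f)(T,T)\ge \tfrac{\lambda}{2}-\mathrm{Ric}(T,T),
\]
and integrating twice from the unit segment and using Fubini yields
\[
h(r)\ge \tfrac{\lambda}{4}\,(r-1)^2+h(1)+h'(1)\,(r-1)-\int_1^r (r-s)\,\mathrm{Ric}(T,T)(s)\,ds.
\]
Since $\frac{\lambda}{4}(r-1)^2=\frac{\lambda}{4}r^2+O(r)$, everything reduces to two tasks: an upper bound for the weighted Ricci integral that is \emph{linear} in $r$, and a control of the initial data $h(1),h'(1)$ in terms of $\sup_{B_{x_0}(1)}|f|$.

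For the curvature term I would invoke the second variation. Since $\gamma|_{[1,r]}$ is minimizing, for any $\phi$ with $\phi(1)=\phi(r)=0$, inserting the variation fields $\phi E_i$ over a parallel orthonormal frame $\{E_i\}$ normal to $\gamma$ and summing gives the index-form inequality
\[
\int_1^r \phi^2\,\mathrm{Ric}(T,T)\,ds\le (n-1)\int_1^r (\phi')^2\,ds.
\]
The point is to choose $\phi^2$ as a regularization of the linear weight $r-s$, made to vanish at $s=1$ and suitably cut off near $s=r$ so that the Dirichlet energy stays controlled. With such a choice the right-hand side is small relative to $r^2$, so that $\int_1^r(r-s)\mathrm{Ric}(T,T)$ differs from the controlled quantity $\int_1^r\phi^2\mathrm{Ric}(T,T)$ only through the mismatch $\phi^2\neq r-s$ on the initial interval, where $r-s\approx r$. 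Crucially this uses only the \emph{one-sided} bound supplied by the second variation, precisely because $\mathrm{Ric}$ itself is not bounded below.

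To handle the initial data I would work on $[0,2]$, a region near $x_0$, where $h$ and the curvature along $\gamma$ are controlled by $n$ and $k:=\sup_{B_{x_0}(1)}|f|$ together with a further second-variation estimate on a slightly larger minimizing segment. The value $h(1)$ is bounded by $k$; a lower bound for $h'(1)$ is obtained by choosing an interior point $t_\ast\in(0,1)$ with $h'(t_\ast)=h(1)-h(0)$, so that $|h'(t_\ast)|\le 2k$, and transporting this to $t=1$ by integrating $h''\ge\frac{\lambda}{2}-\mathrm{Ric}(T,T)$ once more, the curvature contribution on this unit segment again being absorbed through the index-form estimate. Assembling these pieces produces a constant $a>0$ depending only on $n$ and $k$ with $f(x)=h(r)\ge\frac{\lambda}{4}r^2-a\,r$ for all large $r$.

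The main obstacle is the curvature integral. Because the hypothesis controls only $\mathrm{Ric}_f$, the quantity $\mathrm{Ric}(T,T)$ is genuinely sign-indefinite and may concentrate near the endpoints of $\gamma$; the essential mechanism is that the weighted second-variation estimate tames it one-sidedly. The delicate points are therefore the choice of the regularized test function $\phi$ near $s=r$, so that the weighted Ricci integral does not overwhelm the quadratic main term, and the control of the curvature on the segment near $x_0$, so that the entire endpoint contribution is absorbed into a single constant depending solely on $n$ and $\sup_{B_{x_0}(1)}|f|$, producing exactly the linear error $-a\,r$ recorded in the statement.
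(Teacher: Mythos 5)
Your proposal is not the paper's argument: the paper proceeds by contradiction, taking a sequence $q_k\to\infty$ violating the bound, forming the Busemann-type limit $\beta=\lim_k\bigl(d(x_0,q_k)-d(q_k,\cdot)\bigr)$, deriving from an integrated $f$-Laplacian comparison centered at the far points $q_k$ (with the Jacobian correction handled via $J\ln J\ge -1/e$) the weak inequality $\Delta_f\beta\ge a-\frac{1}{2}\beta$ near $x_0$, and then contradicting the Wei--Wylie weighted volume comparison on $B_{x_0}(1)$ once $a$ is large. What you propose instead is the Cao--Zhou geodesic argument, and it contains a genuine gap at the far endpoint $s=r$ --- precisely the place where the soliton proofs cited in the introduction must invoke $|\nabla f|^2\le f$, i.e.\ the gradient information this theorem is designed to avoid.

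Concretely, you claim $\phi$ can be chosen so that $\phi^2$ differs from the weight $r-s$ ``only on the initial interval.'' That is impossible: admissibility in the index form requires $\phi(r)=0$ and $\int(\phi')^2<\infty$, but if $\phi^2=r-s$ on any terminal interval $[c,r]$ then $\int_c^r(\phi')^2=\int_c^r\frac{ds}{4(r-s)}=\infty$. So $\phi^2$ must deviate from $r-s$ on a terminal region, vanishing at least quadratically at $s=r$, and you are left needing an \emph{upper} bound on $\int\bigl[(r-s)-\phi^2\bigr]\mathrm{Ric}(T,T)\,ds$ there, with nonnegative weight. Neither of your tools supplies it: $\mathrm{Ric}_f\ge\frac{\lambda}{2}g$ gives only lower bounds on $\mathrm{Ric}(T,T)+h''$, and the index form controls Ricci integrals only against weights that themselves vanish quadratically at $s=r$; every attempt to close the loop (substituting $\mathrm{Ric}\ge\frac{\lambda}{2}-h''$ and integrating by parts) reintroduces either $h'$ near $x$ or $h(r)$ itself with a sign that makes the inequality vacuous. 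If you concede and run the whole argument with a weight vanishing quadratically at $s=r$, integrating by parts twice so that no derivative of $f$ appears, what comes out is a lower bound on a weighted average such as $2\int_{r-1}^r h - h(r-1)$, not on $h(r)=f(x)$; and upgrading an averaged bound to a pointwise one needs control of $\langle\nabla f,\gamma'\rangle$ near $x$, which is simply absent (on flat $\mathbb{R}^n$ the function $f=\frac{1}{4}|x|^2+e^{x_1}$ satisfies $\mathrm{Ric}_f\ge\frac{1}{2}g$, showing that under this hypothesis $f$ admits no matching upper bound and $|\nabla f|$ no growth control, so no one-sided averaging trick can substitute). Two smaller points: your mean-value-plus-transport bound for $h'(1)$ is circular, since bounding $\int_{t_*}^1\mathrm{Ric}$ by the index form forces ramp terms that, after integration by parts, reintroduce $h'(1)$ with exactly canceling sign; however, this particular defect is harmless, because if you bound the initial mismatch $(r-1)\int_0^1 s^2\,\mathrm{Ric}(T,T)\,ds$ from below by the Bakry--\'Emery inequality and two integrations by parts, the resulting $(r-1)h'(1)$ cancels identically against the $h'(1)(r-1)$ produced by your double integration, so no bound on $h'(1)$ is ever needed. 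The terminal endpoint is the essential obstruction, and the paper's way around it is structural: it centers the comparison at the far points, where only the smooth asymptotics of the volume element at the center are required, and treats the region near $x_0$ weakly, integrated against cutoffs, so that no pointwise gradient information enters anywhere.
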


Note that we make no assumption on $\left\vert \nabla f\right\vert $ here.
For gradient Ricci solitons, Theorem \ref{positive} was previously known.
Indeed, by the Bianchi identity, $\mathrm{Ric}_{f}=\lambda g$ implies (see 
\cite{H}) $S+\left\vert \nabla f\right\vert ^{2}=2\,\lambda f+C,$ where $C$
is some constant. Since $S\geq 0$, see \cite{Ch, Ca1}, in the case $\lambda \geq 0$ and 
$S\geq n\,\lambda $, see \cite{PRS}, if $\lambda <0,$ the aforementioned lower
bound on $f$ follows immediately when $\lambda \leq 0.$ In the case $\lambda
>0,$ Cao and Zhou \cite{CZ} have obtained the following estimate on
potential function $f.$ 
\begin{equation*}
\frac{\lambda }{2}\,\left( r\left( x\right) -c\right) ^{2}\leq f\left(
x\right) \leq \frac{\lambda }{2}\,\left( r\left( x\right) +c\right) ^{2}
\end{equation*}%
for all $x\in M$. These
arguments, however, rely on the information of $\left\vert \nabla
f\right\vert .$ Incidently, the results for gradient Ricci solitons imply
Theorem \ref{positive} is sharp.

\section{A vanishing theorem for $\protect\varphi$-harmonic functions}

In this section we prove Theorem \ref{vanishing}. Let us first fix some
notations. We let $\left(M,g\right) $ be a K\"{a}hler manifold and $\varphi$
a smooth real function on $M.$ Define unitary frame by 
\begin{eqnarray*}
\upsilon _{\alpha } &=&e_{\alpha }-iJe_{\alpha } \\
\upsilon _{\bar{\alpha}} &=&e_{\alpha }+iJe_{\alpha }
\end{eqnarray*}%
for $\alpha \in \left\{ 1,2,..,m\right\},$ where $\left\{ e_{\alpha
},Je_{\alpha }\right\} $ is an orthonormal frame for $g$ and $m$ the complex
dimension of $\left( M,g\right).$ With respect to such frames, we have 
\begin{eqnarray*}
\Delta u &=&u_{\alpha \bar{\alpha}} \\
\left\langle \nabla u,\nabla v\right\rangle &=&\frac{1}{2}\left( u_{\alpha
}v_{\bar{\alpha}}+u_{\bar{\alpha}}v_{\alpha }\right)
\end{eqnarray*}%
for any two functions $u,v\in C^{\infty }\left( M\right) $. Recall that the $%
\varphi -$Laplacian on functions is the operator $\Delta _{\varphi }:=\Delta
-\left\langle \nabla \varphi ,\nabla \right\rangle. $

We restate the theorem we are after.

\begin{theorem}
\label{vanishing1}For a K\"{a}hler manifold $\left( M,g\right) ,$ assume $%
\varphi :M\rightarrow \mathbb{R}$ satisfies $\varphi _{\alpha \beta }=0$ in
any unitary frame. Additionally, assume there exists a constant $C>0$ such
that 
\begin{equation*}
\left\vert \nabla \varphi \right\vert \left( x\right) \leq Cd\left(
x_{0},x\right) ,\text{ \ for any }x\in M.
\end{equation*}%
Then any function $u:M\rightarrow \mathbb{R}$ such that 
\begin{gather*}
\Delta _{\varphi }u=0 \\
\int_{M}\left\vert \nabla u\right\vert ^{2}e^{-\varphi }<\infty
\end{gather*}%
must be pluriharmonic. In particular, it is a constant if $\varphi$ is
proper on $M.$
\end{theorem}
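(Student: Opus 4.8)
The plan is to show that the $(1,1)$-part of the complex Hessian of $u$ vanishes, i.e. $u_{\alpha\bar\beta}=0$ in every unitary frame, since this is precisely the statement that $u$ is pluriharmonic (equivalently, that the $(1,0)$-form $\partial u$ is holomorphic). I would phrase everything in terms of the weighted K\"ahler structure: the natural inner product is the one twisted by $e^{-\varphi}$, and $\Delta_\varphi$ is self-adjoint with respect to $e^{-\varphi}\,dv$. The role of the hypothesis $\varphi_{\alpha\beta}=0$ is structural: it is exactly the condition that makes the weighted $\partial$- and $\bar\partial$-Laplacians agree, so that the sign-indefinite Bakry-\'{E}mery type curvature $R_{\alpha\bar\beta}+\varphi_{\alpha\bar\beta}$ cancels when one compares the two Weitzenb\"ock formulas. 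This is the K\"ahler mechanism behind P. Li's original (unweighted) theorem, and preserving it is the reason $\varphi_{\alpha\beta}=0$ is imposed rather than a curvature bound.

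Concretely, first I would set $\omega:=\partial u$, so that $\bar\partial\omega$ has components $u_{\alpha\bar\beta}$ and $\Delta_\varphi u=0$ says that $\omega$ is weighted co-closed. Integrating the pointwise Bochner--Kodaira identity for $\omega$ against $e^{-\varphi}$ over a compact exhaustion, and using $\varphi_{\alpha\beta}=0$ to discard the torsion and to cancel the curvature between the $\partial$- and $\bar\partial$-sides, I expect to arrive at a curvature-free integral identity of the schematic form
\[
\int_M |u_{\alpha\bar\beta}|^2\,\chi^2 e^{-\varphi}
= (\text{terms in }\nabla\chi,\ \nabla u,\ \mathrm{Hess}\,u)
+ (\text{first-order }\varphi\text{-terms}),
\]
where $\chi$ is a cutoff. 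The commutations needed are all benign on a K\"ahler manifold because the curvature is of type $(1,1)$, so antiholomorphic derivatives commute; this is what lets the third-order terms be rewritten using $\Delta_\varphi u=0$.

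The main obstacle is the analysis at infinity. The only inputs available are the finite weighted Dirichlet energy $\int_M|\nabla u|^2 e^{-\varphi}<\infty$ and the linear bound $|\nabla\varphi|\le C\,d(x_0,\cdot)$; there is no curvature lower bound and no control on $\mathrm{Hess}\,\varphi$ beyond $\varphi_{\alpha\beta}=0$. I would therefore first establish a Caccioppoli-type second-order estimate bounding $\int_{B_R}|\mathrm{Hess}\,u|^2 e^{-\varphi}$ in terms of the energy, and then choose $\chi$ adapted to the weighted geometry so that the boundary and drift contributions, in which $\nabla\varphi$ enters linearly, are dominated by the finite energy and tend to $0$ as $R\to\infty$. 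Balancing the linearly growing factor $|\nabla\varphi|$ against the decay supplied by the cutoff and the energy is the delicate point. Once the right-hand side is shown to vanish in the limit, I conclude $u_{\alpha\bar\beta}\equiv0$, i.e. $u$ is pluriharmonic.

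Finally, for the last assertion, suppose $\varphi$ is proper. Pluriharmonicity gives $\Delta u=\sum_\alpha u_{\alpha\bar\alpha}=0$, so $u$ is genuinely harmonic, and hence $\langle\nabla\varphi,\nabla u\rangle=\Delta u-\Delta_\varphi u=0$. Consequently $\Delta_\varphi(u^2)=2|\nabla u|^2\ge0$, so $u^2$ is $\varphi$-subharmonic. Since $\varphi$ is proper its sublevel sets are compact and $e^{-\varphi}\,dv$ has the exhaustion and finite-weighted-volume behaviour needed for an $L^1$-type Liouville theorem for $\Delta_\varphi$; combined with the finiteness of the weighted energy this forces $|\nabla u|\equiv0$, so $u$ is constant.
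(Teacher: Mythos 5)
Your outline of the first half has the same architecture as the paper's actual proof (cutoff function, integration by parts, K\"ahler commutation identities, with $\varphi_{\alpha\beta}=0$ killing the Hessian-of-$\varphi$ terms), but the two points you defer are exactly where the content lies, and one of your intermediate goals would fail. First, your schematic identity asserts that after the cancellations only terms \emph{linear} in $\nabla\varphi$ survive. That is not automatic: integrating by parts in $\int_M \mathrm{Re}\left( u_{\alpha\bar\beta}u_\beta\varphi_{\bar\alpha}\right)\phi^2 e^{-\varphi}$ produces two terms \emph{quadratic} in $\nabla\varphi$, namely $-\int_M\left\langle\nabla u,\nabla\varphi\right\rangle^2\phi^2e^{-\varphi}$ and $+\int_M \mathrm{Re}\left(u_\alpha u_\beta\varphi_{\bar\alpha}\varphi_{\bar\beta}\right)\phi^2e^{-\varphi}$; since $\left\vert\nabla\varphi\right\vert$ may grow linearly, these grow like $r^2$ against the finite energy and no choice of cutoff can absorb them. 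The proof hinges on the pointwise inequality $\mathrm{Re}\left(u_\alpha u_\beta\varphi_{\bar\alpha}\varphi_{\bar\beta}\right)=\mathrm{Re}\left(z^2\right)\leq\left(\mathrm{Re}\,z\right)^2=\left\langle\nabla u,\nabla\varphi\right\rangle^2$, where $z=\sum_\alpha u_\alpha\varphi_{\bar\alpha}$, which makes the sum of these two terms nonpositive so it can simply be discarded; identifying and proving this sign is the key algebraic step, and ``balancing against the cutoff'' cannot replace it. (By contrast, the genuinely first-order drift terms are not delicate at all: with $\left\vert\nabla\phi\right\vert\leq 1/R$ supported in $B_{x_0}(2R)$ one has $\left\vert\nabla\varphi\right\vert\left\vert\nabla\phi\right\vert\leq 2C$ uniformly, and the tail of the finite weighted energy does the rest.) Second, your proposed Caccioppoli estimate for the \emph{full} Hessian $\int_{B_R}\left\vert\mathrm{Hess}\,u\right\vert^2e^{-\varphi}$ is not available: the real Bochner formula brings in $\mathrm{Ric}(\nabla u,\nabla u)$, and the theorem assumes no curvature bound whatsoever. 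Only the $(1,1)$-part $u_{\alpha\bar\beta}$ obeys a curvature-free identity, because commuting two antiholomorphic indices, $u_{\alpha\bar\beta\bar\alpha}=u_{\alpha\bar\alpha\bar\beta}$, costs nothing on a K\"ahler manifold; this is precisely why the conclusion is pluriharmonicity and nothing stronger.

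The final step contains a genuine error. Your deduction $\left\langle\nabla u,\nabla\varphi\right\rangle=0$ is correct, but the appeal to an $L^1$-type Liouville theorem is unjustified: properness of $\varphi$ means $\left\vert\varphi\right\vert\to\infty$, not $\varphi\to+\infty$, so it gives compact sublevel sets of $\left\vert\varphi\right\vert$ but no finiteness of $\int_M e^{-\varphi}\,dv$. Indeed, in the very application this theorem is designed for (shrinking K\"ahler solitons) one takes $\varphi=-af$ with $f\to+\infty$, so the weighted volume is infinite; moreover $u$ is not assumed bounded or in $L^2(e^{-\varphi}dv)$, so $\varphi$-subharmonicity of $u^2$ plus finite weighted energy does not close a Caccioppoli argument either. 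The paper avoids all of this by using the orthogonality geometrically: on the compact sets $D(t)=\left\{x:\left\vert\varphi\right\vert(x)\leq t\right\}$ the outward normal to $\partial D(t)$ is parallel to $\nabla\varphi$, so Stokes' theorem gives
\begin{equation*}
\int_{D(t)}\left\vert\nabla u\right\vert^2=\frac{1}{2}\int_{\partial D(t)}\frac{\partial u^2}{\partial\nu}=\int_{\partial D(t)}u\,\frac{\left\langle\nabla u,\nabla\varphi\right\rangle}{\left\vert\nabla\varphi\right\vert}=0,
\end{equation*}
hence $\nabla u\equiv 0$ and $u$ is constant. You should replace your last step with this argument, which needs neither volume finiteness nor any growth control on $u$.
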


\begin{proof}[Proof of Theorem \protect\ref{vanishing1}]
Our proof is an adaptation to manifolds with weights of a result in \cite{L}%
. Indeed, in \cite{L}, P. Li proved that a harmonic function with finite
Dirichlet integral is pluriharmonic.

Let us assume that there exists a function $u:M\rightarrow \mathbb{R}$ such
that 
\begin{gather}
\ \Delta _{\varphi }u=0  \label{v1} \\
\int_{M}\left\vert \nabla u\right\vert ^{2}e^{-\varphi }<\infty  \notag
\end{gather}%
For a fixed $x_{0}\in M,$ we consider the cut-off function defined by 
\begin{equation*}
\phi \left( x\right) =\left\{ 
\begin{array}{c}
1 \\ 
\frac{2R-d\left( x_{0},x\right) }{R} \\ 
0%
\end{array}%
\right. 
\begin{array}{c}
\text{on\ \ }B_{x_{0}}\left( R\right) \\ 
\text{on\ \ }B_{x_{0}}\left( 2R\right) \backslash B_{x_{0}}\left( R\right)
\\ 
\text{on }M\backslash B_{x_{0}}\left( 2R\right)%
\end{array}%
\end{equation*}%
Integration by parts implies 
\begin{gather*}
\int_{M}\left\vert u_{\alpha \bar{\beta}}\right\vert ^{2}\phi
^{2}e^{-\varphi }=\int_{M}u_{\alpha \bar{\beta}}u_{\beta \bar{\alpha}}\phi
^{2}e^{-\varphi } \\
=-\int_{M}u_{\alpha \bar{\beta}\bar{\alpha}}u_{\beta }\phi ^{2}e^{-\varphi
}+\int_{M}u_{\alpha \bar{\beta}}u_{\beta }\varphi _{\bar{\alpha}}\phi
^{2}e^{-\varphi }-\int_{M}u_{\alpha \bar{\beta}}u_{\beta }\left( \phi
^{2}\right) _{\bar{\alpha}}e^{-\varphi }.
\end{gather*}%
This can be rewritten into 
\begin{gather}
\int_{M}\left\vert u_{\alpha \bar{\beta}}\right\vert ^{2}\phi
^{2}e^{-\varphi }=-\int_{M}\mathrm{Re}\left( u_{\alpha \bar{\beta}\bar{\alpha%
}}u_{\beta }\right) \phi ^{2}e^{-\varphi }  \label{v2} \\
+\int_{M}\mathrm{Re}\left( u_{\alpha \bar{\beta}}u_{\beta }\varphi _{\bar{%
\alpha}}\right) \phi ^{2}e^{-\varphi }-\int_{M}\mathrm{Re}\left( u_{\alpha 
\bar{\beta}}u_{\beta }\left( \phi ^{2}\right) _{\bar{\alpha}}\right)
e^{-\varphi },  \notag
\end{gather}%
where $\mathrm{Re}\left( z\right) :=\frac{z+\bar{z}}{2}$ denotes the real
part of the complex number $z$.

We now investigate each term in (\ref{v2}). First, by the Ricci identities
on K\"{a}hler manifolds, we have 
\begin{gather*}
-\int_{M}\mathrm{Re}\left( u_{\alpha \bar{\beta}\bar{\alpha}}u_{\beta
}\right) \phi ^{2}e^{-\varphi }=-\int_{M}\mathrm{Re}\left( u_{\alpha \bar{%
\alpha}\bar{\beta}}u_{\beta }\right) \phi ^{2}e^{-\varphi } \\
=-\int_{M}\mathrm{Re}\left( \left( \Delta u\right) _{\bar{\beta}}u_{\beta
}\right) \phi ^{2}e^{-\varphi }=-\int_{M}\left\langle \nabla \Delta u,\nabla
u\right\rangle \phi ^{2}e^{-\varphi } \\
=\int_{M}\left( \Delta u\right) \left( \Delta _{\varphi }u\right) \phi
^{2}e^{-\varphi }+\int_{M}\left( \Delta u\right) \left\langle \nabla
u,\nabla \phi ^{2}\right\rangle e^{-\varphi } \\
=\int_{M}\left( \Delta u\right) \left\langle \nabla u,\nabla \phi
^{2}\right\rangle e^{-\varphi }.
\end{gather*}%
We thus conclude by the Cauchy-Schwarz inequality that%
\begin{gather}
-\int_{M}\mathrm{Re}\left( u_{\alpha \bar{\beta}\bar{\alpha}}u_{\beta
}\right) \phi ^{2}e^{-\varphi }\leq \frac{1}{4m}\int_{M}\left( \Delta
u\right) ^{2}\phi ^{2}e^{-\varphi }  \label{v3} \\
+4m\int_{M}\left\vert \nabla u\right\vert ^{2}\left\vert \nabla \phi
\right\vert ^{2}e^{-\varphi }  \notag \\
\leq \frac{1}{4}\int_{M}\left\vert u_{\alpha \bar{\beta}}\right\vert
^{2}\phi ^{2}e^{-\varphi }+4m\int_{M}\left\vert \nabla u\right\vert
^{2}\left\vert \nabla \phi \right\vert ^{2}e^{-\varphi }.  \notag
\end{gather}%
Secondly, integration by parts gives 
\begin{gather*}
\int_{M}\mathrm{Re}\left( u_{\alpha \bar{\beta}}u_{\beta }\varphi _{\bar{%
\alpha}}\right) \phi ^{2}e^{-\varphi }=-\int_{M}\mathrm{Re}\left( u_{\alpha
}u_{\beta \bar{\beta}}\varphi _{\bar{\alpha}}\right) \phi ^{2}e^{-\varphi }
\\
-\int_{M}\mathrm{Re}\left( u_{\alpha }u_{\beta }\varphi _{\bar{\alpha}\bar{%
\beta}}\right) \phi ^{2}e^{-\varphi }+\int_{M}\mathrm{Re}\left( u_{\alpha
}u_{\beta }\varphi _{\bar{\alpha}}\varphi _{\bar{\beta}}\right) \phi
^{2}e^{-\varphi } \\
-\int_{M}\mathrm{Re}\left( u_{\alpha }u_{\beta }\varphi _{\bar{\alpha}%
}\left( \phi ^{2}\right) _{\bar{\beta}}\right) e^{-\varphi },
\end{gather*}%
where we have invoked the assumption 
\begin{equation*}
\varphi _{\alpha \beta }=\varphi _{\bar{\alpha}\bar{\beta}}=0.
\end{equation*}%
Therefore, 
\begin{gather}
\int_{M}\mathrm{Re}\left( u_{\alpha \bar{\beta}}u_{\beta }\varphi _{\bar{%
\alpha}}\right) \phi ^{2}e^{-\varphi }=-\int_{M}\left( \Delta u\right) 
\mathrm{Re}\left( u_{\alpha }\varphi _{\bar{\alpha}}\right) \phi
^{2}e^{-\varphi }  \label{v4} \\
+\int_{M}\mathrm{Re}\left( u_{\alpha }u_{\beta }\varphi _{\bar{\alpha}%
}\varphi _{\bar{\beta}}\right) \phi ^{2}e^{-\varphi }-\int_{M}\mathrm{Re}%
\left( u_{\alpha }u_{\beta }\varphi _{\bar{\alpha}}\left( \phi ^{2}\right) _{%
\bar{\beta}}\right) e^{-\varphi }  \notag \\
=-\int_{M}\left\langle \nabla u,\nabla \varphi \right\rangle ^{2}\phi
^{2}e^{-\varphi }+\int_{M}\mathrm{Re}\left( u_{\alpha }u_{\beta }\varphi _{%
\bar{\alpha}}\varphi _{\bar{\beta}}\right) \phi ^{2}e^{-\varphi }  \notag \\
-\int_{M}\mathrm{Re}\left( u_{\alpha }u_{\beta }\varphi _{\bar{\alpha}%
}\left( \phi ^{2}\right) _{\bar{\beta}}\right) e^{-\varphi }.  \notag
\end{gather}%
Notice, however, that 
\begin{equation}
\mathrm{Re}\left( u_{\alpha }u_{\beta }\varphi _{\bar{\alpha}}\varphi _{\bar{%
\beta}}\right) \leq \left\langle \nabla u,\nabla \varphi \right\rangle ^{2}.
\label{*}
\end{equation}%
Plugging into (\ref{v4}), we conclude 
\begin{equation}
\int_{M}\mathrm{Re}\left( u_{\alpha \bar{\beta}}u_{\beta }\varphi _{\bar{%
\alpha}}\right) \phi ^{2}e^{-\varphi }\leq 2\int_{M}\left\vert \nabla
\varphi \right\vert \left\vert \nabla u\right\vert ^{2}\phi \left\vert
\nabla \phi \right\vert e^{-\varphi }.  \label{v5}
\end{equation}%
Using (\ref{v3}) and (\ref{v5}) in (\ref{v2}), we find%
\begin{gather}
\int_{M}\left\vert u_{\alpha \bar{\beta}}\right\vert ^{2}\phi
^{2}e^{-\varphi }\leq \frac{1}{2}\int_{M}\left\vert u_{\alpha \bar{\beta}%
}\right\vert ^{2}\phi ^{2}e^{-\varphi }  \label{v6} \\
+8m\int_{M}\left\vert \nabla u\right\vert ^{2}\left\vert \nabla \phi
\right\vert ^{2}e^{-\varphi }+2\int_{M}\left\vert \nabla \varphi \right\vert
\left\vert \nabla u\right\vert ^{2}\phi \left\vert \nabla \phi \right\vert
e^{-\varphi }.  \notag
\end{gather}%
Since $\int_{M}\left\vert \nabla u\right\vert ^{2}e^{-\varphi }<\infty ,$ it
is easy to see that as $R\rightarrow \infty ,$%
\begin{equation*}
\int_{M}\left\vert \nabla u\right\vert ^{2}\left\vert \nabla \phi
\right\vert ^{2}e^{-\varphi }\rightarrow 0.
\end{equation*}%
Furthermore, by the assumption that $\left\vert \nabla \varphi \right\vert
\leq Cd\left( x_{0},x\right),$ we have $\left\vert \nabla \varphi
\right\vert \left\vert \nabla \phi \right\vert \leq C$ on $M$. Consequently, 
\begin{equation*}
\int_{M}\left\vert \nabla \varphi \right\vert \left\vert \nabla u\right\vert
^{2}\phi \left\vert \nabla \phi \right\vert e^{-\varphi }\rightarrow 0.
\end{equation*}
We can now conclude that $u_{\alpha \bar{\beta}}=0$ or $u$ is pluriharmonic
by letting $R\rightarrow \infty $ in (\ref{v6}).

To show $u$ is constant in the case $\varphi$ is proper, we first note that $%
\left\langle \nabla u,\nabla \varphi \right\rangle =0$ as $u$ is both $%
\varphi$-harmonic and harmonic. Denote 
\begin{equation*}
D\left( t\right) :=\left\{ x:\left\vert \varphi \right\vert \left( x\right)
\leq t\right\} ,
\end{equation*}%
which is compact as $\varphi $ is proper. Now 
\begin{eqnarray*}
\int_{D\left( t\right) }\left\vert \nabla u\right\vert ^{2} &=&\frac{1}{2}%
\int_{D\left( t\right) }\Delta u^{2}=\frac{1}{2}\int_{\partial D\left(
t\right) }\frac{\partial u^{2}}{\partial \nu } \\
&=&\int_{\partial D\left( t\right) }u\frac{\left\langle \nabla u,\nabla
\varphi \right\rangle }{\left\vert \nabla \varphi \right\vert } \\
&=&0.
\end{eqnarray*}%
Since this is true for any $t,$ it follows that $\left\vert \nabla
u\right\vert =0$ on $M$ or $u$ is a constant. Theorem \ref{vanishing1} is
proved.
\end{proof}

\section{Ends of shrinking solitons}

In this section, we assume $\left( M,g,f\right) $ is a K\"{a}hler shrinking
Ricci soliton. So, in terms of unitary frames, we have 
\begin{eqnarray}
R_{\alpha \bar{\beta}}+f_{\alpha \bar{\beta}} &=&g_{\alpha \bar{\beta}}
\label{srs} \\
f_{\alpha \beta } &=&0.  \notag
\end{eqnarray}%
For a shrinking Ricci soliton, the potential $f$ satisfies \cite{CZ}%
\begin{equation*}
\frac{1}{4}r^{2}\left( x\right) -cr\left( x\right) \leq f\left( x\right)
\leq \frac{1}{4}r^{2}\left( x\right) +cr\left( x\right)
\end{equation*}%
for all $r\left( x\right) :=d\left( x_{0},x\right)$ sufficiently large.
Using the Bianchi identities, Hamilton \cite{H} proved 
\begin{equation*}
S+\left\vert \nabla f\right\vert ^{2}=f,
\end{equation*}%
where $S$ denotes the scalar curvature of $\left( M,g\right) $. Since $S\geq
0$ according to a result by Chen \cite{Ch}, we have 
\begin{equation*}
\left\vert \nabla f\right\vert ^{2}\leq f.
\end{equation*}%
We gather these properties here for future reference.%
\begin{gather}
\frac{1}{4}r^{2}\left( x\right) -cr\left( x\right) \leq f\left( x\right)
\leq \frac{1}{4}r^{2}\left( x\right) +cr\left( x\right)  \label{f} \\
\left\vert \nabla f\right\vert ^{2}\leq f  \notag \\
S\geq 0.  \notag
\end{gather}

We now prove the following theorem.

\begin{theorem}
\label{shrinker1}Let $\left( M,g,f\right) $ be a gradient K\"{a}hler
shrinking Ricci soliton. Then $\left( M,g\right) $ has only one end.
\end{theorem}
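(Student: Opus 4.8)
The plan is to set $\varphi=-f$ and deduce Theorem \ref{shrinker1} from the vanishing result Theorem \ref{vanishing1} together with the Li--Tam function theory of ends, carried out for the weighted operator $\Delta_\varphi u=\Delta u-\langle\nabla\varphi,\nabla u\rangle=\Delta u+\langle\nabla f,\nabla u\rangle$ and the weighted measure $e^{-\varphi}\,dv=e^{f}\,dv$. The first step is to verify that $\varphi=-f$ satisfies the hypotheses of Theorem \ref{vanishing1}. The soliton equation (\ref{srs}) gives $\varphi_{\alpha\beta}=-f_{\alpha\beta}=0$ in any unitary frame; from (\ref{f}) we have $|\nabla\varphi|=|\nabla f|\le\sqrt{f}\le\tfrac12\,r(x)+c$ for $r(x)$ large, hence $|\nabla\varphi|(x)\le C\,d(x_{0},x)$ on all of $M$; and $\varphi=-f$ is proper since $f\to+\infty$ with compact sublevel sets by (\ref{f}). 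Thus Theorem \ref{vanishing1} applies and tells us that \emph{every} $\varphi$-harmonic $u$ with $\int_{M}|\nabla u|^{2}e^{-\varphi}<\infty$ is constant. The entire proof therefore reduces to ruling out the existence of a \emph{nonconstant} such $u$, which by the Li--Tam machinery is exactly what a disconnected infinity would supply.

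Next I would recall the relevant function theory. An end $E$ (with respect to a fixed ball $B_{x_{0}}(R_{0})$) is \emph{$\varphi$-nonparabolic} if there is a bounded nonconstant $\varphi$-harmonic function on $E$ with $u=1$ on $\partial E$ and $0\le u\le 1$; such $u$ arises as the monotone limit of the solutions $u_{R}$ of $\Delta_{\varphi}u_{R}=0$ on $E\cap B_{x_{0}}(R)$ with $u_{R}=1$ on $\partial E$ and $u_{R}=0$ on $E\cap\partial B_{x_{0}}(R)$, the end being $\varphi$-nonparabolic precisely when this limit is not identically $1$. The Li--Tam theory, transplanted to the weight $e^{-\varphi}$, attaches to each $\varphi$-nonparabolic end a bounded $\varphi$-harmonic function of finite weighted Dirichlet energy, linearly independent modulo constants; hence the number of $\varphi$-nonparabolic ends is at most one more than the dimension of the space of nonconstant bounded $\varphi$-harmonic functions with $\int_{M}|\nabla u|^{2}e^{-\varphi}<\infty$. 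By the first paragraph this space is trivial, so $M$ has \emph{at most one} $\varphi$-nonparabolic end. Consequently it suffices to show that \emph{every} end of $M$ is $\varphi$-nonparabolic, for then the total number of ends coincides with the number of $\varphi$-nonparabolic ends, which is at most one.

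Establishing $\varphi$-nonparabolicity of every end is the main obstacle, and it is here that the shrinking structure must be used in full. The guiding intuition is that the weight $e^{-\varphi}=e^{f}$ grows like $e^{r^{2}/4}$ by (\ref{f}), so the weighted volume is enormous and the space ought to be very far from parabolic; the difficulty is that this is a statement about \emph{volume}, whereas nonparabolicity is a \emph{capacity} lower bound, and the two are not interchangeable without further geometric input. The cleanest route I would try is a barrier: one computes directly that $\Delta_{\varphi}e^{-f}=\Delta e^{-f}+\langle\nabla f,\nabla e^{-f}\rangle=-(\Delta f)\,e^{-f}$, and tracing (\ref{srs}) gives $\Delta f=m-S$, so
\[
\Delta_{\varphi}e^{-f}=(S-m)\,e^{-f}.
\]
Thus (a suitable normalization of) the bounded positive function $e^{-f}$, which tends to $0$ at infinity, is $\varphi$-superharmonic precisely where the scalar curvature satisfies $S\le m$; on such a region the comparison principle forces the capacity potential $u$ below this barrier, yielding nonparabolicity. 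The hard part is therefore to control $S$ on the ends: one expects $S$ to be small there (note $S=f-|\nabla f|^{2}=o(f)$ by the sharp estimates of \cite{CZ}, and $S\ge 0$ by \cite{Ch}), so that $\{S>m\}$ is confined to a compact core and the barrier governs the ends; making this rigorous in general, or else replacing the barrier by a weighted Green's-function/heat-kernel estimate, is the technically delicate step, the more so because the sign of $\Delta_{\varphi}f$ itself is not controlled.

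Finally I would assemble the pieces: since every end is $\varphi$-nonparabolic while at most one $\varphi$-nonparabolic end can exist, $M$ has exactly one end, which is the assertion of Theorem \ref{shrinker1}. I note that the Kähler hypothesis enters only through the application of Theorem \ref{vanishing1}; the nonparabolicity argument of the third paragraph uses only the soliton identities (\ref{f}) and is expected to persist for general, not necessarily Kähler, shrinking solitons.
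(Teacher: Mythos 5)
Your overall framework coincides with the paper's: set $\varphi$ proportional to $-f$, check the hypotheses of Theorem \ref{vanishing1} via (\ref{srs}) and (\ref{f}), and use the Li--Tam construction to reduce the theorem to showing that \emph{every} end is $\varphi$-nonparabolic (the paper takes $\varphi=-af$ with a constant $a>0$, but that is cosmetic). The genuine gap is in your third paragraph, which is exactly the step the paper singles out as the technically challenging one, and your proposed barrier argument does not close it. For $e^{-f}$ to be a $\varphi$-superharmonic barrier on an end you need $\Delta_{\varphi}e^{-f}=-(\Delta f)e^{-f}\leq 0$, i.e.\ $S\leq m$ (equivalently $\Delta f\geq 0$), outside a compact set. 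This is \emph{not} known for general gradient shrinking solitons, and your suggested justification is a non sequitur: Hamilton's identity $S=f-\left\vert \nabla f\right\vert ^{2}$ together with the Cao--Zhou bounds $\left\vert \nabla f\right\vert ^{2}\leq f$ and $\frac{1}{4}\left( r-c\right) ^{2}\leq f\leq \frac{1}{4}\left( r+c\right)^{2}$ gives only $0\leq S\leq f$; there is no pointwise \emph{lower} bound on $\left\vert \nabla f\right\vert$ available, so the claim $S=o(f)$ does not follow, and a priori $S$ could be comparable to $f\sim r^{2}/4$ along a sequence going to infinity, so $\left\{ S>m\right\}$ need not be confined to a compact core. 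You flagged this as the "technically delicate step" but offered no mechanism to carry it out --- and it is the heart of the theorem.

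What the paper actually does for this step avoids any pointwise control of $S$ beyond $S\geq 0$ (used only through (\ref{f})). It argues by contradiction: if an end $E$ were $\varphi$-parabolic, then by Nakai's criterion there exists a \emph{proper} $\varphi$-harmonic function $h\geq 1$ on $E$ with $h=1$ on $\partial E$ and $h\rightarrow \infty$ at infinity of $E$. Stokes' theorem shows that $\int_{l(t)}\left\vert \nabla h\right\vert e^{-\varphi }$ is independent of the level $t$, and the co-area formula then gives $\int_{E}\left\vert \nabla \ln h\right\vert ^{2}e^{-\varphi }<\infty$. Since $e^{-\varphi }=e^{af}\geq e^{\frac{a}{4}r^{2}-cr}$ by (\ref{f}), this forces the unit-ball decay $\int_{B_{x}\left( 1\right) }\left\vert \nabla \ln h\right\vert ^{2}\leq e^{-\frac{a}{4}r^{2}\left( x\right) +cr\left( x\right) }$. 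The hard work is upgrading this integral decay to a pointwise one: $\sigma :=\left\vert \nabla \ln h\right\vert ^{2}$ satisfies a differential inequality coming from the Bochner formula and the equation for $h$, and a Moser iteration based on the Sobolev inequality of \cite{MW} (whose constant degrades only like $e^{c\left( n\right) r\left( x\right) }$, thanks to the volume lower bound of \cite{MW3}) yields $\sigma \left( x\right) \leq Ce^{c\left( n\right) r\left( x\right) }\int_{B_{x}\left( 1\right) }\sigma$, hence $\left\vert \nabla \ln h\right\vert \left( x\right) \leq ce^{-\frac{a}{16}r^{2}\left( x\right) }$. Integrating along minimizing geodesics shows $h$ is bounded, contradicting properness; so all ends are $\varphi$-nonparabolic. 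If you want to salvage your write-up, this contradiction argument (or some substitute for it) must replace the barrier paragraph; the rest of your outline then assembles correctly as you describe.
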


\begin{proof}[Proof of Theorem \protect\ref{shrinker1}]
While with respect to the weight function $f,$ the smooth metric space $%
\left( M,g,e^{-f}dv\right)$ has constant positive Bakry-\'{E}mery curvature $%
\mathrm{Ric}_{f}=\frac{1}{2}g,$ we will consider a different weight instead.
Let us set throughout this section 
\begin{equation}
\varphi :=-af,  \label{p}
\end{equation}%
where $a>0$ is a constant. We will be interested in the properties of $%
\left( M,g,e^{-\varphi }dv\right) .$ Clearly, the Bakry-\'{E}mery curvature
associated to this smooth metric measure space is given by 
\begin{eqnarray*}
\mathrm{Ric}_{\varphi } &=&\mathrm{Ric}+\mathrm{Hess}\left( \varphi \right)
\\
&=&\frac{1}{2}g-\left( a+1\right) \mathrm{Hess}\left( \varphi \right) .
\end{eqnarray*}%
The important feature is that $\left( M,g,e^{-\varphi}dv\right) $ has only $%
\varphi -$nonparabolic ends, a fact we will demonstrate below. Recall that
an end $E$ of $\left( M,g,e^{-\varphi }dv\right) $ is said to be $\varphi -$%
nonparabolic if there exists a positive Green's function for the weighted
Laplacian 
\begin{equation*}
\Delta _{\varphi }u:=\Delta u-\left\langle \nabla \varphi ,\nabla
u\right\rangle
\end{equation*}%
satisfying the Neumann boundary conditions on $\partial E.$ Otherwise, it is
called $\varphi -$parabolic. We refer to \cite{Li} for some important facts
concerning parabolicity. Although \cite{Li} studies the usual Laplacian $%
\Delta ,$ the results there extend without much effort to the weighted
Laplacian case.

We now show that $M$ does not admit any $\varphi -$parabolic ends. This fact
is true without assuming $\left( M,g\right) $ is K\"{a}hler. Suppose $E$ is
a $\varphi -$parabolic end of $M.$ A convenient way to characterize a $%
\varphi -$parabolic end, due to Nakai \cite{N}, is by the existence of a
proper $\varphi -$harmonic function $h$ on the end. So we have a function $%
h\geq 1$ defined on $E$ such that 
\begin{gather}
\lim_{x\rightarrow E\left( \infty \right) }h\left( x\right) =\infty \text{ \
and }h=1\text{ on }\partial E,  \label{s1} \\
\Delta _{\varphi }h=\Delta h+a\left\langle \nabla h,\nabla f\right\rangle =0.
\notag
\end{gather}%
Our goal is to show that (\ref{s1}) leads to a contradiction, which implies
that all ends of $\left( M,g\right) $ are $\varphi -$nonparabolic.

Since this part of the proof does not make use of the fact of $\left(
M,g\right) $ being K\"{a}hler, we will use moving frame notations with
indices from $i=1,..,n,$ where $n=2m$ is the real dimension of $\left(
M,g\right).$ Hence, $\left( u_{ij}\right)$ is the real hessian of the
function $u$ and $\left\langle \nabla u,\nabla v\right\rangle =u_{i}v_{i}$
in any orthonormal frame.

For $t>1$ and $b>c>1,$ we denote 
\begin{eqnarray*}
l\left( t\right)  &:&=\left\{ x\in E:h\left( x\right) =t\right\}  \\
L\left( c,b\right)  &:&=\left\{ x\in E:c<h\left( x\right) <b\right\} .
\end{eqnarray*}%
Notice that by (\ref{s1}) we know $l\left( t\right) $ and $L\left(
c,b\right) $ are compact. By the Stokes formula we get 
\begin{eqnarray*}
0 &=&\int_{L\left( c,b\right) }\left( \Delta _{\varphi }h\right) e^{-\varphi
} \\
&=&\int_{l\left( b\right) }\frac{\partial h}{\partial \nu }e^{-\varphi
}-\int_{l\left( c\right) }\frac{\partial h}{\partial \nu }e^{-\varphi } \\
&=&\int_{l\left( b\right) }\left\vert \nabla h\right\vert e^{-\varphi
}-\int_{l\left( c\right) }\left\vert \nabla h\right\vert e^{-\varphi },
\end{eqnarray*}%
where we have used that $\frac{\partial }{\partial \nu }=\frac{\nabla h}{%
\left\vert \nabla h\right\vert }$ is the unit normal to the level set of $h.$
This shows that $\int_{l\left( t\right) }\left\vert \nabla h\right\vert
e^{-\varphi }$ is independent of $t\geq 1.$ Now we apply co-area formula to
get (cf. \cite{LT}) 
\begin{eqnarray*}
\int_{E}\left\vert \nabla \ln h\right\vert ^{2}e^{-\varphi }
&=&\int_{L\left( 1,\infty \right) }\left\vert \nabla \ln h\right\vert
^{2}e^{-\varphi } \\
&=&\int_{1}^{\infty }\left( \int_{l\left( t\right) }\frac{\left\vert \nabla
h\right\vert }{h^{2}}e^{-\varphi }\right) dt \\
&=&\left( \int_{1}^{\infty }\frac{1}{t^{2}}dt\right) \int_{l\left(
t_{0}\right) }\left\vert \nabla h\right\vert e^{-\varphi }<\infty .
\end{eqnarray*}%
In particular, for any $x\in E$ with $B_{x}\left( 1\right) \subset E,$ we
have by (\ref{f})%
\begin{equation}
\int_{B_{x}\left( 1\right) }\left\vert \nabla \ln h\right\vert ^{2}\leq e^{-%
\frac{a}{4}r^{2}\left( x\right) +cr\left( x\right) },  \label{s2}
\end{equation}%
where $r\left( x\right) :=d\left( x_{0},x\right) $. We now transform (\ref%
{s2}) into a pointwise estimate. This is somewhat technical as $\left\vert
\nabla \ln h\right\vert ^{2}$ does not satisfy a convenient differential
inequality. Let 
\begin{equation*}
v:=\ln h.
\end{equation*}%
Then 
\begin{equation*}
\frac{1}{2}\Delta \left\vert \nabla v\right\vert ^{2}=\left\vert \mathrm{Hess%
}\left( v\right) \right\vert ^{2}+\left\langle \nabla \Delta v,\nabla
v\right\rangle +\mathrm{Ric}\left( \nabla v,\nabla v\right) .
\end{equation*}%
Since by (\ref{s1}) 
\begin{eqnarray}
\Delta v &=&\frac{1}{h}\Delta h-\frac{1}{h^{2}}\left\vert \nabla
h\right\vert ^{2}  \label{a1} \\
&=&-a\left\langle \nabla v,\nabla f\right\rangle -\left\vert \nabla
v\right\vert ^{2},  \notag
\end{eqnarray}%
we conclude that 
\begin{eqnarray*}
\frac{1}{2}\Delta \left\vert \nabla v\right\vert ^{2} &=&\left\vert \mathrm{%
Hess}\left( v\right) \right\vert ^{2}-a\left\langle \nabla \left\langle
\nabla v,\nabla f\right\rangle ,\nabla v\right\rangle -\left\langle \nabla
\left\vert \nabla v\right\vert ^{2},\nabla v\right\rangle  \\
&&+\frac{1}{2}\left\vert \nabla v\right\vert ^{2}-\mathrm{Hess}\left(
f\right) \left( \nabla v,\nabla v\right)  \\
&\geq &\frac{1}{2}\left\vert \mathrm{Hess}\left( v\right) \right\vert
^{2}-a^{2}f\left\vert \nabla v\right\vert ^{2}-\left\langle \nabla
\left\vert \nabla v\right\vert ^{2},\nabla v\right\rangle  \\
&&-\left( a+1\right) \mathrm{Hess}\left( f\right) \left( \nabla v,\nabla
v\right) .
\end{eqnarray*}%
By the Cauchy-Schwarz inequality and (\ref{a1}), we obtain%
\begin{eqnarray*}
\left\vert \mathrm{Hess}\left( v\right) \right\vert ^{2} &\geq &\frac{1}{n}%
\left( \Delta v\right) ^{2} \\
&=&\frac{1}{n}\left( a\left\langle \nabla v,\nabla f\right\rangle
-\left\vert \nabla v\right\vert ^{2}\right) ^{2} \\
&\geq &\frac{1}{2n}\left\vert \nabla v\right\vert ^{4}-a^{2}f\left\vert
\nabla v\right\vert ^{2}.
\end{eqnarray*}%
For convenience, let us denote 
\begin{equation*}
\sigma :=\left\vert \nabla v\right\vert ^{2}.
\end{equation*}%
In conclusion, we have the following inequality%
\begin{equation}
\sigma ^{2}\leq 8na^{2}f\sigma +\,4n\left\langle \nabla \sigma ,\nabla
v\right\rangle +4n\left( a+1\right) f_{ij}v_{i}v_{j}+2n\Delta \sigma .
\label{s3}
\end{equation}%
Multiplying (\ref{s3}) by $\phi ^{2}\sigma ^{p-1}$ and integrating over $M,$
where $\phi $ is a cut-off function with support in $B_{x}\left( 1\right) ,$
we get 
\begin{gather}
\int_{M}\sigma ^{p+1}\ \phi ^{2}\leq 8na^{2}\int_{M}f\sigma ^{p}\phi
^{2}+4n\int_{M}\left\langle \nabla \sigma ,\nabla v\right\rangle \sigma
^{p-1}\phi ^{2}  \label{s4} \\
+4n\left( a+1\right) \int_{M}f_{ij}v_{i}v_{j}\sigma ^{p-1}\phi
^{2}+2n\int_{M}\sigma ^{p-1}\left( \Delta \sigma \right) \phi ^{2}.  \notag
\end{gather}%
Notice now that 
\begin{gather}
\int_{M}\left\langle \nabla \sigma ,\nabla v\right\rangle \sigma ^{p-1}\phi
^{2}=\frac{1}{p}\int_{M}\left\langle \nabla \sigma ^{p},\nabla
v\right\rangle \ \phi ^{2}  \label{s5} \\
=-\frac{1}{p}\int_{M}\sigma ^{p}\Delta v\ \phi ^{2}-\frac{1}{p}%
\int_{M}\sigma ^{p}\left\langle \nabla v,\nabla \phi ^{2}\right\rangle  
\notag \\
=\frac{1}{p}\int_{M}\sigma ^{p}\left( a\left\langle \nabla v,\nabla
f\right\rangle +\sigma \right) \ \phi ^{2}-\frac{1}{p}\int_{M}\sigma
^{p}\left\langle \nabla v,\nabla \phi ^{2}\right\rangle   \notag \\
\leq \frac{2}{p}\int_{M}\sigma ^{p+1}\phi ^{2}+\frac{2}{p}\int_{M}\left\vert
\nabla \phi \right\vert ^{2}\sigma ^{p}+\frac{2}{p}a^{2}\int_{M}f\sigma
^{p}\phi ^{2}.  \notag
\end{gather}%
Integrating by parts, we get 
\begin{eqnarray}
\int_{M}\sigma ^{p-1}\left( \Delta \sigma \right) \phi ^{2} &=&-\left(
p-1\right) \int_{M}\left\vert \nabla \sigma \right\vert ^{2}\sigma
^{p-2}\phi ^{2}-\int_{M}\sigma ^{p-1}\left\langle \nabla \sigma ,\nabla \phi
^{2}\right\rangle   \label{s6} \\
&\leq &-\left( p-2\right) \int_{M}\left\vert \nabla \sigma \right\vert
^{2}\sigma ^{p-2}\phi ^{2}+\int_{M}\sigma ^{p}\left\vert \nabla \phi
\right\vert ^{2}.  \notag
\end{eqnarray}%
Similarly, integration by parts gives%
\begin{gather}
\int_{M}f_{ij}v_{i}v_{j}\sigma ^{p-1}\phi
^{2}=-\int_{M}v_{ij}v_{j}f_{i}\sigma ^{p-1}\phi ^{2}-\int_{M}\left( \Delta
v\right) f_{i}v_{i}\sigma ^{p-1}\phi ^{2}  \label{s7} \\
-\left( p-1\right) \int_{M}f_{i}v_{i}v_{j}\sigma _{j}\sigma ^{p-2}\phi
^{2}-\int_{M}f_{i}v_{i}v_{j}\sigma ^{p-1}\left( \phi ^{2}\right) _{j}. 
\notag
\end{gather}%
We now study each term on the right hand side. 
\begin{eqnarray*}
-\int_{M}v_{ij}v_{j}f_{i}\sigma ^{p-1}\phi ^{2} &=&-\frac{1}{2}%
\int_{M}\left\langle \nabla \left\vert \nabla v\right\vert ^{2},\nabla
f\right\rangle \sigma ^{p-1}\phi ^{2} \\
&=&-\frac{1}{2p}\int_{M}\left\langle \nabla \sigma ^{p},\nabla
f\right\rangle \phi ^{2} \\
&=&\frac{1}{2p}\int_{M}\sigma ^{p}\left( \Delta f\right) \phi ^{2}+\frac{1}{%
2p}\int_{M}\sigma ^{p}\left\langle \nabla f,\nabla \phi ^{2}\right\rangle  \\
&\leq &\int_{M}f\sigma ^{p}\phi ^{2}+\int_{M}\sigma ^{p}\left\vert \nabla
\phi \right\vert ^{2}.
\end{eqnarray*}%
Also, 
\begin{eqnarray*}
-\int_{M}\left( \Delta v\right) f_{i}v_{i}\sigma ^{p-1}\phi ^{2}
&=&\int_{M}\left( a\left\langle \nabla v,\nabla f\right\rangle +\left\vert
\nabla v\right\vert ^{2}\right) \left\langle \nabla f,\nabla v\right\rangle
\sigma ^{p-1}\phi ^{2} \\
&\leq &npa\int_{M}f\sigma ^{p}\phi ^{2}+\frac{1}{npa}\int_{M}\sigma
^{p+1}\phi ^{2}.
\end{eqnarray*}%
The third term in the right hand side of (\ref{s7}) is estimated by 
\begin{gather*}
-\left( p-1\right) \int_{M}f_{i}v_{i}v_{j}\sigma _{j}\sigma ^{p-2}\phi
^{2}\leq p\int_{M}\left\vert \nabla f\right\vert \left\vert \nabla \sigma
\right\vert \sigma ^{p-1}\phi ^{2} \\
\leq \frac{p-2}{8na}\int_{M}\left\vert \nabla \sigma \right\vert ^{2}\sigma
^{p-2}\phi ^{2}+4npa\int_{M}f\sigma ^{p}\phi ^{2}.
\end{gather*}%
The last term in (\ref{s7}) is 
\begin{eqnarray*}
-\int_{M}f_{i}v_{i}v_{j}\sigma ^{p-1}\left( \phi ^{2}\right) _{j} &\leq
&2\int_{M}\left\vert \nabla f\right\vert \sigma ^{p}\phi \left\vert \nabla
\phi \right\vert  \\
&\leq &\int_{M}f\sigma ^{p}\phi ^{2}+\int_{M}\sigma ^{p}\left\vert \nabla
\phi \right\vert ^{2}.
\end{eqnarray*}%
Plugging all these estimates in (\ref{s7}), we conclude 
\begin{gather}
4n\left( a+1\right) \int_{M}f_{ij}v_{i}v_{j}\sigma ^{p-1}\phi ^{2}\leq
a^{3}p\int_{M}f\sigma ^{p}\phi ^{2}+\frac{4}{p}\int_{M}\sigma ^{p+1}\phi ^{2}
\label{s8} \\
+\frac{p-2}{2}\int_{M}\left\vert \nabla \sigma \right\vert ^{2}\sigma
^{p-2}\phi ^{2}+a^{2}\int_{M}\sigma ^{p}\left\vert \nabla \phi \right\vert
^{2}.  \notag
\end{gather}%
Using (\ref{s5}), (\ref{s6}) and (\ref{s8}), we get from (\ref{s4}) that for 
$p$ large enough and depending on $n,$ 
\begin{equation}
\ \left( p-2\right) \int_{M}\left\vert \nabla \sigma \right\vert ^{2}\sigma
^{p-2}\phi ^{2}\leq a^{3}p\int_{M}f\sigma ^{p}\phi ^{2}\
+a^{2}\int_{M}\sigma ^{p}\left\vert \nabla \phi \right\vert ^{2}.  \label{s9}
\end{equation}%
Note that (\ref{s9}) is true for any $\phi $ with support in $B_{x}\left(
1\right) $. We now recall the Sobolev inequality established in \cite{MW},
which says that there exist constants $\mu >1,$ $c_{1}$ and $c_{2},$ all
depending only on $n=2m$ such that 
\begin{equation*}
\left( \int_{B_{x}\left( 1\right) }\psi ^{2\mu }\right) ^{\frac{1}{\mu }%
}\leq \frac{c_{1}e^{c_{2}A}}{\mathrm{Vol}\left( B_{x}\left( 1\right) \right)
^{2\left( 1-\frac{1}{\mu }\right) }}\int_{B_{x}\left( 1\right) }\left\vert
\nabla \psi \right\vert ^{2}+\frac{c_{1}}{\mathrm{Vol}\left( B_{x}\left(
1\right) \right) }\int_{B_{x}\left( 1\right) }\psi ^{2}\ 
\end{equation*}%
for $\ \psi \in C_{0}^{\infty }\left( B_{x}\left( 1\right) \right) ,$
where 
\begin{equation*}
A:=\mathrm{osc}_{B_{x}\left( 3\right) }\left\vert f\right\vert ,
\end{equation*}%
the oscillation of $f$ over the geodesic ball $B_{x}(3).$ We also note that
there exists a constant $c(n)>0$ depending only on dimension $n$ \cite{MW3}
such that 
\begin{equation*}
\mathrm{Vol}\left( B_{x}\left( 1\right) \right) \geq \mathrm{Vol}\left(
B_{x_{0}}\left( 1\right) \right) \,e^{-C(n)\,d\left( x_{0},x\right) }.
\end{equation*}%
Therefore, using (\ref{f}), we arrive at a Sobolev inequality of the form 
\begin{equation}
\left( \int_{B_{x}\left( 1\right) }\psi ^{2\mu }\right) ^{\frac{1}{\mu }%
}\leq \ Ce^{c\left( n\right) r\left( x\right) }\int_{B_{x}\left( 1\right)
}\left\vert \nabla \psi \right\vert ^{2}+C\int_{B_{x}\left( 1\right) }\psi
^{2}  \label{S}
\end{equation}%
for all $\psi \in C_{0}^{\infty }\left( B_{x}\left( 1\right) \right) ,$
where $r\left( x\right) :=d\left( x_{0},x\right) .$ Using (\ref{s9}) and (%
\ref{S}), by the standard DeGiorgi-Nash-Moser iteration, we obtain for any $%
0\leq \theta ,\rho <1,$%
\begin{equation*}
\sup_{B_{x}\left( \theta \rho \right) }\sigma \leq C\left( 1+\left( 1-\theta
\right) ^{-2}\rho ^{-2}\right) ^{\frac{\mu }{\mu -1}\frac{1}{p}}e^{c\left(
n\right) r\left( x\right) }\left( \frac{1}{\mathrm{Vol}\left(
B_{x_{0}}\left( \rho \right) \right) \,}\int_{B_{x}\left( \rho \right)
}\sigma ^{p}\right) ^{\frac{1}{p}}
\end{equation*}%
for some $p$ depending only on dimension $n$ and given by (\ref{s9}). Here
the constant $C$ depends on $a,$ but it is independent of $r\left( x\right) $%
.

Now a standard argument (see \cite{Li}) implies 
\begin{equation}
\sigma \left( x\right) \leq Ce^{c\left( n\right) r\left( x\right)
}\int_{B_{x}\left( 1\right) }\sigma .  \label{s10}
\end{equation}%
Combining (\ref{s10}) with (\ref{s2}), one concludes 
\begin{equation}
\left\vert \nabla \ln h\left( x\right) \right\vert \leq \sqrt{\sigma }\left(
x\right) \leq c\,e^{-\frac{a}{16}\,r^{2}\left( x\right) }.  \label{s11}
\end{equation}%
Integrating (\ref{s11}) along minimizing geodesics we immediately see that $%
h $ must be bounded on the end $E,$ which contradicts with (\ref{s1}). This
proves all ends of $\left( M,g\right) $ are $\varphi -$nonparabolic.

To finish the proof of the Theorem, we assume to the contrary that $\left(
M,g\right) $ has more than one end. Since each end must be $\varphi -$%
nonparabolic, the construction of Li and Tam \cite{LT, Li} implies that
there exists a nonconstant $\varphi -$harmonic function $u:M\rightarrow 
\mathbb{R}$ such that 
\begin{eqnarray*}
\Delta _{\varphi }u &=&0 \\
\int_{M}\left\vert \nabla u\right\vert ^{2}e^{-\varphi } &<&\infty .
\end{eqnarray*}%
But $M$ as a K\"{a}hler shrinking Ricci soliton satisfies all the
assumptions of Theorem \ref{vanishing} with $\varphi=-a\,f.$ Indeed, (\ref%
{srs}) implies that $\varphi _{\alpha \beta}=0.$ Also (\ref{f}) shows $%
\varphi $ is proper and $\left\vert \nabla \varphi \right\vert $ grows at
most linearly in the distance function. Therefore, we conclude that $u$ must
be a constant. This contradiction shows that $M$ has only one end. Theorem %
\ref{shrinker1} is proved.
\end{proof}

\section{Ends of expanding solitons}

In this section, we show an expanding K\"{a}hler Ricci soliton must be
connected at infinity if its potential function is proper. First, let us
recall some useful information about expanding Ricci solitons. An expanding
gradient Ricci soliton is a manifold $\left( M,g\right) $ for which there
exists a smooth potential $f$ with the property that 
\begin{equation*}
\mathrm{Ric}+\mathrm{Hess}\left( f\right) =-\frac{1}{2}g.
\end{equation*}%
Since $\left( M,g\right) $ is K\"{a}hler, in terms of unitary frames, we
have 
\begin{eqnarray}
R_{\alpha \bar{\beta}}+f_{\alpha \bar{\beta}} &=&-g_{\alpha \bar{\beta}}
\label{ers} \\
f_{\alpha \beta } &=&0.  \notag
\end{eqnarray}%
Hamilton's identity now reads as 
\begin{equation*}
S+\left\vert \nabla f\right\vert ^{2}=-f.
\end{equation*}%
It is known \cite{PRS} that $S\geq -\frac{n}{2}$ with equality holding if
and only if $\left( M,g\right) $ is an Einstein manifold. So one has $%
\left\vert \nabla f\right\vert ^{2}\leq \left( -f\right) +\frac{n}{2}$ and 
\begin{equation*}
-f\left( x\right) \leq \frac{1}{4}r^{2}\left( x\right) +cr\left( x\right) .
\end{equation*}%
Let us summarize these facts as 
\begin{eqnarray}
-f\left( x\right)  &\leq &\frac{1}{4}r^{2}\left( x\right) +cr\left( x\right) 
\label{scal} \\
S &>&-\frac{n}{2}  \notag \\
\left\vert \nabla f\right\vert ^{2} &<&\left( -f\right) +\frac{n}{2},  \notag
\end{eqnarray}%
for any non-trivial expanding gradient Ricci soliton.

We now prove Theorem \ref{expanding}.

\begin{theorem}
\label{expanding1}Let $\left( M,g,f\right) $ be an expanding K\"{a}hler
gradient Ricci soliton. Assume that the potential $f$ is proper. Then $%
\left( M,g\right) $ has only one end.
\end{theorem}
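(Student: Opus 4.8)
The plan is to mirror the three-step scheme used to prove Theorem \ref{shrinker1}. First I would choose a weight proportional to $f$ so that the associated weighted Laplacian has only nonparabolic ends; second, invoke the Li--Tam construction to produce a nonconstant $\varphi$-harmonic function with finite weighted Dirichlet energy whenever $M$ is disconnected at infinity; and third, apply the vanishing theorem (Theorem \ref{vanishing}) to force that function to be constant, a contradiction. The natural choice here is $\varphi:=af$ for a constant $a>0$. Indeed, since $f$ is assumed proper while (\ref{scal}) gives $f<n/2$, one has $f\to-\infty$, so that $e^{-\varphi}=e^{-af}$ carries growing mass at infinity, which is exactly the regime in which ends tend to be $\varphi$-nonparabolic. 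Moreover (\ref{ers}) yields $\varphi_{\alpha\beta}=af_{\alpha\beta}=0$, the bound $|\nabla f|^2<(-f)+n/2$ together with (\ref{scal}) gives $|\nabla\varphi|\leq Cr$, and properness of $f$ makes $\varphi$ proper; these are precisely the hypotheses of Theorem \ref{vanishing}.

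The heart of the argument is showing that every end is $\varphi$-nonparabolic, and this part makes no use of the K\"ahler condition. Suppose to the contrary that some end $E$ is $\varphi$-parabolic. By Nakai's criterion there is a proper $\varphi$-harmonic function $h\geq 1$ on $E$ with $h\to\infty$ at infinity and, via the coarea argument, $\int_E|\nabla\ln h|^2 e^{-\varphi}<\infty$. Writing $v=\ln h$ and $\sigma=|\nabla v|^2$, I would run the Bochner computation exactly as in the passage leading to (\ref{s3})--(\ref{s9}), but substituting the expanding equation (\ref{ers}) and the bound $|\nabla f|^2<(-f)+n/2$ for their shrinking counterparts. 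The Einstein-type term $\mathrm{Ric}(\nabla v,\nabla v)$ now enters with the opposite (in fact favorable) sign, but the structure of the resulting differential inequality is unchanged, with $-f$ playing the role formerly played by $f$. Feeding this into the Sobolev inequality (\ref{S}) and iterating by De Giorgi--Nash--Moser produces a pointwise bound of the shape $\sigma(x)\leq Ce^{c(n)r(x)}\int_{B_x(1)}\sigma$, where $c(n)$ depends only on the dimension while $C$ may depend on $a$. Combining with $\int_E\sigma\,e^{-\varphi}<\infty$ and the growth of the weight forces $\sigma$ to decay, so that $|\nabla\ln h|$ is integrable along rays into $E$; hence $h$ is bounded, contradicting $h\to\infty$.

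Once all ends are known to be $\varphi$-nonparabolic, the proof concludes as before: if $M$ had two or more ends, the Li--Tam construction would yield a nonconstant $u$ with $\Delta_\varphi u=0$ and $\int_M|\nabla u|^2 e^{-\varphi}<\infty$, and Theorem \ref{vanishing} would then force $u$ to be constant, a contradiction; therefore $M$ has exactly one end. The step I expect to be the main obstacle is the nonparabolicity, and within it the competition between the decay of $\sigma$ and the dimensional factor $e^{c(n)r}$. In the shrinking case the quadratic lower bound $f\geq\frac14 r^2-cr$ of (\ref{f}) makes the weight grow fast enough automatically; for an expanding soliton, however, (\ref{scal}) only furnishes an \emph{upper} bound on $-f$, so one must extract enough lower growth of $-f$ from the properness of $f$, and exploit the freedom to take $a$ large, to guarantee that $e^{-a(-f)}$ overwhelms $e^{c(n)r}$ and that $|\nabla\ln h|$ is genuinely integrable along rays. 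Verifying this growth is the delicate point on which the whole argument turns.
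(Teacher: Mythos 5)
Your overall architecture --- prove that all ends are nonparabolic for a weight built from $f$, then combine the Li--Tam construction with Theorem \ref{vanishing} --- is the same as the paper's, and your verification of the hypotheses of Theorem \ref{vanishing} via (\ref{ers}) and (\ref{scal}) is correct. The genuine gap is exactly at the point you call ``delicate'': you cannot establish nonparabolicity by transplanting the shrinking-case Bochner/Moser argument. That argument was driven by the two-sided quadratic estimate (\ref{f}), which converts the finiteness of $\int_E\left\vert\nabla\ln h\right\vert^2e^{-\varphi}$ into the Gaussian local bound (\ref{s2}); the Moser factor $e^{c(n)r(x)}$ is then harmless because $e^{-\frac{a}{4}r^2+cr}$ decays far faster. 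For an expander, properness of $f$ gives $-f\to\infty$ with \emph{no rate whatsoever}: a proper potential may grow as slowly as $\log\log r$, in which case $e^{-a(-f)}$ is overwhelmed by $e^{c(n)r(x)}$ for every fixed $a$, and enlarging $a$ never helps. Since no quantitative lower bound for $-f$ can be extracted from properness alone, the chain of estimates (\ref{s2})--(\ref{s11}) cannot be closed by this route. There is a secondary problem as well: the Sobolev inequality of \cite{MW} and the unit-ball volume lower bound of \cite{MW3} that feed the iteration were proved using $\mathrm{Ric}_f\geq0$, respectively the shrinker structure, whereas here $\mathrm{Ric}_f=-\frac12 g$, so even the analytic inputs to your second step are not in place as stated.

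The paper replaces this step by an argument that uses properness only qualitatively. Working with the weight $f$ itself, one computes $\Delta_fe^{af}=-\left(a\left(\frac n2+S\right)+\left(a-a^2\right)\left\vert\nabla f\right\vert^2\right)e^{af}$ and takes $a=\frac12$, so that $e^{f/2}>0$ is a supersolution; this yields the weighted Poincar\'e inequality $\int_M\sigma\phi^2e^{-f}\leq\int_M\left\vert\nabla\phi\right\vert^2e^{-f}$ with weight $\sigma=\frac14\left(-f+\frac n2\right)$. Properness enters only through the observation that $\sigma\geq1$ outside a compact set, and by the argument of \cite{MW1} a weighted Poincar\'e inequality whose weight is bounded below by a positive constant near infinity already forces every end to be $f$-nonparabolic --- no growth rate, Sobolev inequality, or volume estimate is needed. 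If you substitute this spectral-type step for your Moser iteration, the rest of your outline goes through as written.
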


\begin{proof}[Proof of Theorem \protect\ref{expanding1}]
Since $-f>-\frac{n}{2},$ the assumption that $f$ is proper implies that 
\begin{equation*}
\lim_{x\rightarrow \infty }\left( -f\right) \left( x\right) =\infty .
\end{equation*}%
We will use several ideas from \cite{MW1} with some improvements. It was
proved in \cite{MW1} that the following weighted Poincar\'{e} inequality
holds on $M.$%
\begin{equation}
\int_{M}\left( S+\frac{n}{2}\right) \phi ^{2}e^{-f}\leq \int_{M}\left\vert
\nabla \phi \right\vert ^{2}e^{-f}  \label{wpi}
\end{equation}%
for all $\phi \in C_{0}^{\infty }\left( M\right) .$ For our purpose,
however, a different inequality will be more useful. We compute 
\begin{eqnarray*}
\Delta _{f}e^{af} &=&\left( a\Delta _{f}\left( f\right) +a^{2}\left\vert
\nabla f\right\vert ^{2}\right) e^{af} \\
&=&-\left( a\left( \frac{n}{2}+S\right) +\left( a-a^{2}\right) \left\vert
\nabla f\right\vert ^{2}\right) e^{af}.
\end{eqnarray*}%
Choosing $a=1$ yields (\ref{wpi}). We take $a=\frac{1}{2}$ instead and get 
\begin{equation*}
\int_{M}\sigma \phi ^{2}e^{-f}\leq \int_{M}\left\vert \nabla \phi
\right\vert ^{2}e^{-f}
\end{equation*}%
for all $\phi \in C_{0}^{\infty }\left( M\right) ,$ where (see (\ref{scal})) 
\begin{equation*}
\sigma :=\frac{1}{4}\left( -f+\frac{n}{2}\right) >0\text{. }
\end{equation*}%
Since $f$ is proper, it means that there exists a compact set $K\subset M$
such that $-f\geq 3$ on $M\backslash K.$ In particular, it follows that 
\begin{equation}
\sigma \geq 1\text{ \ \ on \ }M\backslash K.  \label{e}
\end{equation}%
In view of (\ref{e}), the argument in \cite{MW1} now shows that all ends of $%
M$ are necessarily $f-$nonparabolic. To finish, assume by contradiction that 
$M$ has more than one end. Again, by the construction of Li and Tam \cite{LT}%
, one finds a nontrivial $f-$harmonic function $u$ on $M$ with finite total
energy $\int_{M}\left\vert \nabla u\right\vert ^{2}e^{-f}<\infty .$ On the
other hand, by (\ref{ers}) and (\ref{scal}), Theorem \ref{vanishing} is
applicable. So we conclude $u$ must be a constant. This contradiction proves
the theorem.
\end{proof}

\section{Weight function estimate}

In this section we prove Theorem \ref{positive}. For convenience, we only
provide the details for the case $\lambda =1$ as the other two cases $%
\lambda =0$ and $\lambda =-1$ follow similarly.

\begin{theorem}
\label{positive1}Let $\left( M,g,e^{-f}dv\right) $ be a smooth metric
measure space with Bakry-\'{E}mery curvature bounded by $\mathrm{Ric}%
_{f}\geq \frac{1}{2}g$. Then for any fixed $x_{0}\in M,$ there exists $a>0$
depending only on dimension $n$ and $\sup_{B_{x_{0}}\left( 1\right)
}\left\vert f\right\vert $ such that 
\begin{equation}
f\left( x\right) \geq \frac{1}{4}\,r^{2}\left( x\right) -a\,r\left( x\right)
\label{lb}
\end{equation}%
for all $x\in M$ with $r\left( x\right)$ sufficiently large, where $r\left(
x\right) :=d\left( x_{0},x\right).$
\end{theorem}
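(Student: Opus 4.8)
The plan is to establish (\ref{lb}) by a one–dimensional comparison along minimizing geodesics issuing from $x_0$, extracting the quadratic term directly from the radial component of the hypothesis $\mathrm{Ric}_f\ge\frac12 g$. Fix $x$ with $r=r(x)$ large, let $\gamma:[0,r]\to M$ be a unit–speed minimizing geodesic from $x_0$ to $x$, and write $g(s)=f(\gamma(s))$. Restricting the curvature bound to $\gamma'$ gives $g''(s)=\mathrm{Hess}(f)(\gamma',\gamma')\ge\frac12-\mathrm{Ric}(\gamma',\gamma')$. Integrating twice and writing the second–order Taylor remainder in the usual way yields
\[
f(x)\ \ge\ \frac14\,r^{2}+f(x_{0})+r\,\partial_{r}f(x_{0})-\int_{0}^{r}(r-s)\,\mathrm{Ric}(\gamma'(s),\gamma'(s))\,ds ,
\]
in which the constant $\frac14$ already produces the asserted leading term. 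The whole estimate then reduces to showing that the three remaining contributions are together bounded below by $-a\,r$ with $a=a(n,\sup_{B_{x_0}(1)}|f|)$.

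The ambient term $\int_0^r(r-s)\,\mathrm{Ric}(\gamma',\gamma')\,ds$ is where the Bakry--\'Emery bound must be used in tandem with the fact that $\gamma$ is minimizing, since no curvature bound is assumed and $\mathrm{Ric}(\gamma',\gamma')$ may well be large or oscillatory. The key tool is the second–variation (index) inequality, which for a minimizing geodesic gives $\int_0^r \phi^2\,\mathrm{Ric}(\gamma',\gamma')\,ds\le (n-1)\int_0^r(\phi')^2\,ds$ for every Lipschitz $\phi$ with $\phi(0)=\phi(r)=0$. Choosing $\phi$ comparable to $\sqrt{r-s}$ away from the two endpoints (tapered on unit intervals near $0$ and $r$ to keep the Dirichlet energy finite) bounds the part of the integral coming from $s$ bounded away from $x_0$ by $C(n)\,r$; this is precisely the Myers mechanism forcing the curvature error to stay linear rather than swamping the quadratic term. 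Near $x$ the weight $(r-s)$ vanishes and the minimizing property keeps the contribution there bounded, so that end is harmless.

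The genuine difficulty is the contribution from the first unit of the geodesic: there the weight $(r-s)$ is of size $\approx r$, precisely where the index test field $\phi$ is forced to vanish, so the second variation is powerless, and the bare bound $g''\ge\frac12-\mathrm{Ric}(\gamma',\gamma')$ is lossy. My plan is to handle this together with the initial–data term $r\,\partial_r f(x_0)$ by averaging the base point over the first unit of $\gamma$: replacing $\partial_r f(x_0)$ by $\int_0^1\rho(s)\,g'(s)\,ds$ for a suitable nonnegative density $\rho$ and integrating by parts expresses it through $g(1)$ and $\int_0^1 g(s)\,ds$, both controlled by $\sup_{B_{x_0}(1)}|f|$ since $\gamma([0,1])\subset\overline{B_{x_0}(1)}$; the weight $\rho(s)=2s$ is the natural choice because it simultaneously converts the near–base curvature integral into one carrying an $s^2$ weight, which the index inequality (with the admissible test field $\phi(s)=s$) does control. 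Reconciling the $(r-s)$ weight produced by the double integration with the endpoint–vanishing weights demanded by the second variation --- above all at the base point --- is the crux of the argument, and the averaging device is what I would use to bridge the two. Once the base contribution is bounded below by $-a$ with $a=a(n,\sup_{B_{x_0}(1)}|f|)$, combining with the previous two steps gives $f(x)\ge\frac14 r^2-a\,r$ for all large $r$; the cases $\lambda=0,-1$ follow verbatim with $\frac12$ replaced by $\frac{\lambda}{2}$.
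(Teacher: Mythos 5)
Your master inequality, obtained by restricting $\mathrm{Ric}_f\ge\frac12 g$ to $\gamma'$ and integrating twice, is correct, and your averaging device is a reasonable repair of the base-point terms ($f(x_0)$, $\partial_r f(x_0)$, and the weight near $s=0$). The fatal gap is at the \emph{other} end, exactly the one you declare harmless. You need $\int_0^r(r-s)\,\mathrm{Ric}(\gamma',\gamma')\,ds\le C(n,\sup_{B_{x_0}(1)}|f|)\,r$, and near $s=r$ neither of your tools can produce an upper bound: the hypothesis gives only a \emph{lower} bound $\mathrm{Ric}(\gamma',\gamma')\ge\frac12-f''$, while the index inequality $\int_0^r\phi^2\,\mathrm{Ric}\le(n-1)\int_0^r(\phi')^2$ requires $\phi(r)=0$ with finite energy, forcing $\phi^2\lesssim(r-s)^2$ there; no admissible $\phi^2$ can dominate the weight $(r-s)$, which vanishes only linearly (your $\phi\sim\sqrt{r-s}$ has divergent Dirichlet energy, and any taper loses precisely the interval near $r$). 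Since no upper bound on $\mathrm{Ric}$ or $\mathrm{Ric}_f$ is assumed, the curvature may be arbitrarily large and positive on $[r-1,r]$ --- large positive Ricci concentrated near $x$ is perfectly compatible with $\gamma$ being minimizing --- so that portion of the integral is simply uncontrolled. Nor can you dodge this by declining to convert $\mathrm{Hess}\,f$ into $\mathrm{Ric}$ on $[r-1,r]$: integrating by parts, $\int_{r-1}^r(r-s)f''\,ds=-f'(\gamma(r-1))+f(x)-f(\gamma(r-1))$, so reverting to $f''$ there cancels the very term $f(x)$ you are estimating and leaves a statement about $f$ and $f'$ at the interior point $\gamma(r-1)$, which is circular. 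Structurally, your two ingredients only ever bound functionals $\int_0^r(\phi^2)''f\,ds$ in which the measure $(\phi^2)''$ has total mass zero, so a pointwise lower bound on $f(x)$ can be extracted only if one already controls $f$ or $\nabla f$ at interior points of $\gamma$; in the soliton case this missing input is exactly Hamilton's identity $S+|\nabla f|^2=f$ (this is how Cao--Zhou close the analogous argument), and the whole point of the theorem, as the paper emphasizes, is that no assumption on $|\nabla f|$ is made.

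The paper's proof is of a different nature and is worth comparing. It argues by contradiction: given a sequence $q_k\to\infty$ violating the asserted bound, it applies an improved $f$-Laplacian comparison to $d(q_k,\cdot)$ (retaining the term $\frac1r\ln(J/r^{n-1})$ rather than discarding it), passes to a limit of the functions $t_k-d(q_k,\cdot)$ to obtain a Lipschitz function $\beta$ satisfying $\Delta_f\beta\ge a-\frac12\beta$ weakly on $M$, and then integrates this inequality against a cutoff supported in $B_{x_0}(1)$; the Wei--Wylie comparison, which needs only $\sup_{B_{x_0}(1)}|f|$, bounds the resulting weighted volume ratio and yields $a\le c(n)\exp\left(16\sup_{B_{x_0}(1)}|f|\right)$, a contradiction once $a$ exceeds this constant. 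Your method, suitably completed, does yield weaker integrated statements (for instance, using the weight $(r-s)^2$, which \emph{is} index-form admissible at the far end, one gets a lower bound of order $r^3$ for $\int_0^r f(\gamma(s))\,ds$), but upgrading such averages to a pointwise bound at $x$ is precisely the step for which the paper's limiting and volume-comparison argument is needed.
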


\begin{proof}[Proof of Theorem \protect\ref{positive1}]
The strategy is to use an improved Laplacian comparison theorem established
in \cite{MW3}. We first discuss its derivation for completeness. Our
derivation here is in fact slightly more direct than that in \cite{MW3}. For
a fixed point $p\in M,$ we denote the volume form in geodesic coordinates by 
\begin{equation*}
dV|_{\exp _{p}\left( r\xi \right) }=J\left( p,r,\xi \right) drd\xi
\end{equation*}%
for $r>0$ and $\xi \in S_{p}M,$ the unit tangent sphere at $p.$ For $x\in M$
a point outside the cut locus of $p$ with $x=\exp _{p}$ $\left( r\xi \right)
,$ we have%
\begin{equation*}
\Delta d\left( p,x\right) =\frac{d}{dr}\ln J\left( p,r,\xi \right) .
\end{equation*}%
We shall omit the dependency of these quantities on $p$ and $\xi $ from now
on. Along a minimizing geodesic $\gamma $ starting from $p$, by the Bochner
formula, we have 
\begin{equation}
m^{\prime }\left( r\right) +\frac{1}{n-1}m^{2}\left( r\right) +\mathrm{Ric}%
\left( \frac{\partial }{\partial r},\frac{\partial }{\partial r}\right) \leq
0,  \label{w1}
\end{equation}%
where the differentiation is with respect to the $r$ variable and $m\left(
r\right) :=\frac{d}{dr}\ln J\left( r\right) .$ Multiplying (\ref{w1}) by $r$
and integrating from $r=\varepsilon >0$ to $r=t>\varepsilon ,$ we get%
\begin{equation}
\int_{\varepsilon }^{t}m^{\prime }\left( r\right) rdr+\frac{1}{n-1}%
\int_{\varepsilon }^{t}m^{2}\left( r\right) rdr+\frac{1}{2}\int_{\varepsilon
}^{t}rdr\leq \int_{\varepsilon }^{t}f^{\prime \prime }\left( r\right) r\,dr,
\label{w2}
\end{equation}%
where we have used $\mathrm{Ric}_{f}\geq \frac{1}{2}.$ Integrating by parts
in (\ref{w2}), we arrive at%
\begin{gather*}
m\left( t\right) t-m\left( \varepsilon \right) \varepsilon +\frac{1}{n-1}%
\int_{\varepsilon }^{t}r\left( m\left( r\right) -\frac{n-1}{r}\right)
^{2}dr\leq \int_{\varepsilon }^{r}\left( \frac{n-1}{r}-m\left( r\right)
\right) dr \\
-\frac{1}{4}t^{2}+\frac{1}{4}\varepsilon ^{2}+\int_{\varepsilon
}^{t}f^{\prime \prime }\left( r\right) r\,dr.
\end{gather*}%
In particular, this yields the following%
\begin{equation*}
m\left( t\right) t+\ln \frac{J\left( t\right) }{t^{n-1}}\leq m\left(
\varepsilon \right) \varepsilon +\ln \frac{J\left( \varepsilon \right) }{%
\varepsilon ^{n-1}}-\frac{1}{4}t^{2}+\frac{1}{4}\varepsilon
^{2}+\int_{\varepsilon }^{t}f^{\prime \prime }\left( r\right) r\,dr.
\end{equation*}%
Now letting $\varepsilon \rightarrow 0,$ noting that $\varepsilon m\left(
\varepsilon \right) \rightarrow n-1$ and $\frac{J\left( \varepsilon \right) 
}{\varepsilon ^{n-1}}\rightarrow 1,$ we obtain 
\begin{equation*}
m\left( t\right) +\frac{1}{t}\ln \frac{J\left( t\right) }{t^{n-1}}\leq \frac{%
n-1}{t}-\frac{1}{4}t+\frac{1}{t}\int_{0}^{t}f^{\prime \prime }\left(
r\right) rdr.
\end{equation*}%
Integrating by parts on the last term, we get%
\begin{equation}
\Delta _{f}d\left( p,x\right) \leq \frac{n-1}{r}-\frac{1}{4}r-\frac{1}{r}\ln
\left( \frac{J\left( p,r,\xi \right) }{r^{n-1}}\right) -\frac{1}{r}\left(
f\left( x\right) -f\left( p\right) \right).  \label{w3}
\end{equation}

The proof of the theorem is by contradiction. So we fix a large enough
constant $a>0,$ to be determined explicitly later, and assume that for this
fixed $a$ there exists no $r_{0}$ such that (\ref{lb}) is true. This means
there exists a sequence $q_{k}\rightarrow \infty $ such that 
\begin{equation}
f\left( q_{k}\right) \leq \frac{1}{4}\,r^{2}\left( q_{k}\right) -a\,r\left(
q_{k}\right) ,  \label{w4}
\end{equation}%
where $r\left( x\right) :=d\left( x_{0},x\right) $. We now adapt to our
setting some ideas in \cite{MW3} for proving a splitting theorem for certain
smooth metric measure spaces containing a line.

More precisely, let $\gamma _{k}\left( t\right) $ be the minimizing geodesic
from $x_{0}$ to $q_{k},$ where 
\begin{equation*}
0\leq t\leq t_{k}:=d\left( x_{0},q_{k}\right) .
\end{equation*}%
For a fixed point $x\in M$, not in the cut locus of $q_{k}=\gamma _{k}\left(
t_{k}\right) $, let $\tau _{k}\left( s\right) $ be the unique minimizing
geodesic from $\tau _{k}\left( 0\right) =q_{k}$ to $\tau _{k}\left(
r_{k}\right) :=x$. By (\ref{w4}), we have 
\begin{equation}
f\left( q_{k}\right) \leq \frac{1}{4}t_{k}^{2}-at_{k} \text{ \ \ for all }k.
\label{w5}
\end{equation}%
Let 
\begin{equation*}
r_{k}:=d\left( \gamma \left( t_{k}\right) ,x\right) =d\left( q_{k},x\right) .
\end{equation*}%
According to (\ref{w3}), we have%
\begin{gather}
\Delta _{f}d\left( q_{k},x\right) +\frac{1}{r_{k}}\ln J\left(
q_{k},r_{k},\tau _{k}^{\prime }\left( 0\right) \right) \leq \frac{n-1}{r_{k}}%
\left( 1+\ln r_{k}\right) -\frac{1}{4}r_{k}  \label{w6} \\
+\frac{f\left( q_{k}\right) }{r_{k}}-\frac{f\left( x\right) }{r_{k}}.  \notag
\end{gather}%
We now claim that for any $\varepsilon >0$ and $k$ sufficiently large, 
\begin{equation}
\frac{n-1}{r_{k}}\left( 1+\ln r_{k}\right) -\frac{1}{4}r_{k}+\frac{f\left(
q_{k}\right) }{r_{k}}-\frac{f\left( x\right) }{r_{k}}\leq \frac{1}{2}\left(
t_{k}-r_{k}\right) -a+\varepsilon .  \label{w7}
\end{equation}%
To verify this, first note that by the triangle inequality, 
\begin{equation}
\left\vert t_{k}-r_{k}\right\vert \leq d\left( x_{0},x\right) .  \label{w8}
\end{equation}%
Since $x$ is fixed, for $k$ sufficiently large we have 
\begin{equation*}
\frac{n-1}{r_{k}}\left( 1+\ln r_{k}\right) -\frac{f\left( x\right) }{r_{k}}%
\leq \frac{\varepsilon }{2}.
\end{equation*}%
Using (\ref{w5}), we have,%
\begin{eqnarray*}
&&-\frac{1}{4}r_{k}+\frac{f\left( q_{k}\right) }{r_{k}}-\frac{1}{2}\left(
t_{k}-r_{k}\right) +a=\frac{1}{4r_{k}}\left( -r_{k}^{2}+4f\left(
q_{k}\right) -2r_{k}\left( t_{k}-r_{k}\right) +4ar_{k}\right) \\
&\leq &\frac{1}{4r_{k}}\left( -r_{k}^{2}+t_{k}^{2}-4at_{k}-2r_{k}\left(
t_{k}-r_{k}\right) +4ar_{k}\right) =\frac{1}{4r_{k}}\left( \left(
t_{k}-r_{k}\right) ^{2}-4a\left( t_{k}-r_{k}\right) \right) \\
&\leq &\frac{1}{4r_{k}}\left( d\left( x_{0},x\right) ^{2}+4ad\left(
x_{0},x\right) \right) ,
\end{eqnarray*}%
where we have used (\ref{w8}) in the last step. Hence, for $k$ sufficiently
large, 
\begin{equation*}
-\frac{1}{4}r_{k}+\frac{f\left( q_{k}\right) }{r_{k}}-\frac{1}{2}\left(
t_{k}-r_{k}\right) +a\leq \frac{\varepsilon }{2}.
\end{equation*}%
In conclusion, (\ref{w7}) holds true. It then follows from (\ref{w6}) that,
for $k$ sufficiently large, 
\begin{equation}
\Delta _{f}d\left( q_{k},x\right) +\frac{1}{r_{k}}\ln J\left(
q_{k},r_{k},\xi _{k}\right) \leq \frac{1}{2}\left( t_{k}-r_{k}\right)
-a+\varepsilon ,  \label{w9}
\end{equation}%
where $\xi _{k}:=\tau _{k}^{\prime }\left( 0\right) ,$ $t_{k}:=d\left(
x_{0},q_{k}\right) $, and $r_{k}:=d\left( q_{k},x\right) .$ Let us emphasize
that (\ref{w9}) holds for any $x$ not in the cut-locus of $q_{k}.$

For compact domain $\Omega \subset M,$ there exists a constant $c>0$
independent of $k$ so that 
\begin{equation*}
\Omega \subset B_{q_{k}}\left( t_{k}+c\right) \backslash B_{q_{k}}\left(
t_{k}-c\right) .
\end{equation*}%
Consequently, there exists a constant $c_{0}>0$ so that $r_{k}>t_{k}-c_{0}$
whenever $x\in \Omega .$ Also, note that the function $h\left( J\right)
:=J\ln J$ is bounded below by $-\frac{1}{e}.$ For nonnegative smooth
function $\phi $ with support in $\Omega ,$ multiplying (\ref{w9}) by $\phi $
and integrating over $\Omega ,$ we conclude that 
\begin{gather}
\int_{M}\left( \Delta _{f}d\left( q_{k},x\right) \right) \phi \leq \frac{C}{%
t_{k}-c_{0}}\sup_{\Omega }\phi +\int_{M}\left( \frac{1}{2}\left(
t_{k}-d\left( q_{k},x\right) \right) -a\right) \phi   \label{w10} \\
+\varepsilon \sup_{\Omega }\phi ,  \notag
\end{gather}%
where we have used 
\begin{gather*}
\int_{\mathbb{S}^{n-1}}J\left( q_{k},r_{k},\xi _{k}\right) \ln J\left(
q_{k},r_{k},\xi _{k}\right) \phi d\xi _{k}\geq -\frac{1}{e}\int_{\mathbb{S}%
^{n-1}}\phi d\xi _{k} \\
\geq -C\sup_{\Omega }\phi .
\end{gather*}%
We now define 
\begin{equation*}
\beta _{k}\left( x\right) :=t_{k}-d\left( q_{k},x\right) .
\end{equation*}%
Clearly, $\beta _{k}\left( x_{0}\right) =0$ and $\left\vert \nabla \beta
_{k}\right\vert \leq 1.$ Hence, a subsequence of $\beta _{k}$ converges, as $%
k\rightarrow \infty ,$ uniformly on compact sets to a Lipschitz function $%
\beta $ satisfying 
\begin{equation*}
\left\vert \beta \right\vert \left( x\right) \leq d\left( x_{0},x\right) 
\text{ on }M.
\end{equation*}%
Moreover, taking limit in (\ref{w10}) implies that in the weak sense 
\begin{equation}
\Delta _{f}\beta \geq a-\frac{1}{2}\beta \text{ \ \ on }M.  \label{w11}
\end{equation}%
Let $\phi $ be a cut-off function with $\phi =1$ on $B_{x_{0}}\left( \frac{1%
}{2}\right) ,$ $\phi =0$ on $M\backslash B_{x_{0}}\left( 1\right) $ and $%
\left\vert \nabla \phi \right\vert \leq 2.$ Multiplying (\ref{w11}) by $\phi 
$ and integrating, we get 
\begin{eqnarray*}
a\mathrm{Vol}_{f}\left( B_{x_{0}}\left( \frac{1}{2}\right) \right)  &\leq &%
\frac{1}{2}\int_{M}\phi \beta e^{-f}+\int_{M}\left( \Delta _{f}\beta \right)
\phi e^{-f} \\
&\leq &\frac{1}{2}\int_{B_{x_{0}}\left( 1\right) }d\left( x_{0},\cdot
\right) e^{-f}-\int_{M}\left\langle \nabla \beta ,\nabla \phi \right\rangle
e^{-f} \\
&\leq &3\mathrm{Vol}_{f}\left( B_{x_{0}}\left( 1\right) \right) .
\end{eqnarray*}%
This means that 
\begin{equation}
a\leq 3\frac{\mathrm{Vol}_{f}\left( B_{x_{0}}\left( 1\right) \right) }{%
\mathrm{Vol}_{f}\left( B_{x_{0}}\left( \frac{1}{2}\right) \right) }.
\label{a}
\end{equation}%
On the other hand, the Laplacian comparison theorem in \cite{WW} implies
that for $\frac{1}{4}\leq r\leq 1,$ 
\begin{equation*}
\Delta _{f}r\leq \ c\left( n\right) \left( 1+\sup_{B_{x_{0}}\left( 1\right)
}\left\vert f\right\vert \right) .
\end{equation*}%
Therefore, after integrating, we have 
\begin{equation*}
\frac{\mathrm{Vol}_{f}\left( B_{x_{0}}\left( 1\right) \right) }{\mathrm{Vol}%
_{f}\left( B_{x_{0}}\left( \frac{1}{2}\right) \right) }\leq c\left( n\right)
\exp \left( 16\sup_{B_{x_{0}}\left( 1\right) }\left\vert f\right\vert
\right) .
\end{equation*}%
Now the theorem follows by combining this with (\ref{a}).
\end{proof}

\newpage

\address {\noindent Department of Mathematics\\ University of Connecticut\\
Storrs, CT 06268\\ USA\\ \email{\textit{E-mail address}: {\tt
ovidiu.munteanu@uconn.edu} } \vskip 0.3in \address {\noindent School of
Mathematics \\ University of Minnesota\\ Minneapolis, MN 55455\\ USA\\ 
\email{\textit{E-mail address}: {\tt jiaping@math.umn.edu}} \end{document}